\renewcommand{\pod}[1]{\allowbreak\mathchoice
	{\if@display \mkern 18mu\else \mkern 8mu\fi (#1)}
	{\if@display \mkern 18mu\else \mkern 8mu\fi (#1)}
	{\mkern4mu(#1)}
	{\mkern4mu(#1)}
}
\newcommand{\Z}{\mathbb{Z}}
\newcommand{\C}{\mathbb{C}}
\newcommand{\R}{\mathbb{R}}
\newcommand{\N}{\mathbb{N}}
\renewcommand{\epsilon}{\varepsilon}
\newcommand{\m}[4]{\begin{pmatrix}
		#1&#2\\#3&#4
	\end{pmatrix}}
	\renewcommand{\epsilon}{\varepsilon}
	\newcommand{\g}[1]{\mathfrak{#1}}
	\newcommand{\Sel}{\mathcal{S}}
	\theoremstyle{plain}
	\newtheorem{theorem}{Theorem}
	\newtheorem{corollary}{Corollary}
	\theoremstyle{remark}
	\newtheorem{remark}{Remark}
	\theoremstyle{definition}
	\numberwithin{equation}{section}
	\DeclareMathOperator{\res}{Res}
\begin{document}
		\title{\textbf{On the linear twist of degree 1 functions in the extended Selberg class}}
		\author{Giamila Zaghloul}
		\affil{Dipartimento di Matematica\\Università degli Studi di Genova\\via Dodecaneso 35, 16146 Genova}
		\date{}
		\maketitle
		\begin{abstract}
			Given a degree 1 function $F\in\Sel^{\sharp}$ and a real number $\alpha$, we consider the linear twist $F(s,\alpha)$, proving that it satisfies a functional equation reflecting $s$ into $1-s$, which can be seen as a Hurwitz-Lerch type of functional equation. We also derive some results on the distribution of the zeros of the linear twist. 
		\end{abstract}
		\section{Introduction}\label{section_selberg}
		In 1989, Selberg \cite{sel} presented his axiomatic definition of the class of $L$-functions. For a complete overview of the theory we refer to the surveys of Kaczorowski~\cite{k} and Perelli \cite{p1}, \cite{p2}. In this work we focus on the so called \emph{extended Selberg class} $\Sel^{\sharp}$, which is the class of the non identically zero Dirichlet series 
		\[
		F(s)=\sum_{n=1}^{\infty}\frac{a(n)}{n^s}	\]
		absolutely convergent for $\Re(s)=\sigma>1$, admitting a meromorphic continuation to the complex plane and satisfying a functional equation of the form \[
		\Phi(s)=\omega \overline{\Phi}(1-s),\quad\text{where}\quad \Phi(s)=Q^s\prod_{j=1}^{r}\Gamma(\lambda_js+\mu_j)F(s)=\gamma(s)F(s),
		\]
		with $Q>0$, $\lambda_j>0$, $\Re(\mu_j)\geq 0$, $|\omega|=1$ and $\overline{\Phi}(s)=\overline{\Phi(\overline{s})}$ (see e.g. \cite{p1} for the precise definition).
		Given $F\in \Sel^\sharp$, the \emph{degree} of $F$ is defined by
		\[
		d=2\sum_{j=1}^{r}\lambda_j.
		\]
		The degree is an \emph{invariant} for $F$, i.e. it is uniquely determined by the function itself and does not depend on the shape of the functional equation, which is not unique (cf. e.g. \cite[p. 27]{p1} or \cite{k}).\\ 
		Then, $\Sel^{\sharp}$ splits into the disjoint union of the subclasses $\Sel^{\sharp}_d$ of the functions with given degree $d\geq 0$. The so-called \emph{Strong degree conjecture} states that $\Sel^{\sharp}_d=\emptyset$ if $d\notin\N$.  
		So far, the conjecture is proved in the range $0< d<2$. In particular, several authors independently proved that $\Sel^{\sharp}_d=\emptyset$ for $0<d<1$, (cf. \cite{ric}, \cite{boc}, \cite{co-gh}, \cite{mol}), while the proof for $1<d<2$ is due to Kaczorowski and Perelli \cite{k-p_vii}. For degrees $d=0$ and $d=1$, the elements of $\Sel$ and $\Sel^{\sharp}$ have been completely characterized. The results below describe the structure of $\Sel^{\sharp}_0$ and $\Sel^{\sharp}_1$, \cite[Theorems 1 and 2]{k-p_i}.
		\begin{theorem}[Kaczorowski-Perelli]\phantomsection\label{Sel_0}
			\begin{itemize}
				\item[(i)] Let $F\in\Sel^{\sharp}_0$. Then $q\in\N$, the pair $(q,\omega)$ is an invariant for $F$ and $\Sel^{\sharp}_0$ is the disjoint union of the subclasses $S^{\sharp}_0(q,\omega)$, with $q\in\N$ and $|\omega|=1$.
				\item[(ii)] Let $F\in\Sel^{\sharp}_0(q,\omega)$, with $q,\omega$ as above. Then $F(s)$ is a Dirichlet polynomial of the form 
				\begin{equation}\label{D-poly}
				F(s)=\sum_{n\mid q}\frac{a(n)}{n^s}.
				\end{equation}
				\item[(iii)] $V^{\sharp}_0(q,\omega)=\Sel^{\sharp}_0(q,\omega)\cup \{0\}$ is a real vector space of dimension $d(q)=\sum_{d\mid q}1$. 
			\end{itemize}
			
		\end{theorem}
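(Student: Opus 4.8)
The plan is to exploit the fact that in degree $0$ there are no $\Gamma$-factors, so the functional equation relates the absolutely convergent Dirichlet series of $F$ directly to a second absolutely convergent Dirichlet series, the \emph{dual} one, living in a left half-plane. Since $d=2\sum_{j}\lambda_{j}=0$ and each $\lambda_{j}>0$, there are no $\Gamma$-factors at all; thus $\gamma(s)=Q^{s}$, $\Phi(s)=Q^{s}F(s)$, and the functional equation reads $Q^{s}F(s)=\omega\,Q^{1-s}\overline F(1-s)$, i.e. $F(s)=\omega\,q^{1/2-s}\overline F(1-s)$ with $q:=Q^{2}>0$. First I would observe that $F$ is entire: its only possible pole is at $s=1$, but on the right-hand side $\overline F(1-s)$ is $\overline F$ evaluated near $0$, which is regular since $\overline F$ has at most a pole at $s=1$; hence $F$ is regular at $s=1$. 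Inserting the Dirichlet expansion $\overline F(1-s)=\sum_{n\ge1}\overline{a(n)}\,n^{s-1}$ (valid for $\sigma<0$) into the functional equation gives
\[
F(s)=\omega\,q^{1/2}\sum_{n\ge1}\frac{\overline{a(n)}}{n}\Big(\frac{n}{q}\Big)^{s}\qquad(\sigma<0).
\]

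The analytic core of the argument — and the step I expect to be the main obstacle — is to transfer information between these two expansions across the critical strip. Both Dirichlet series converge absolutely, so $F$ is bounded on each vertical line with $\sigma>1$ and on each with $\sigma<0$; since $F$ has finite order (a standard feature of the class $\Sel^{\sharp}$), a Phragmén--Lindelöf argument on a strip straddling $\sigma=1$ shows that $F$ is bounded there, and hence, by Bohr's theory, almost periodic in that strip, with a single spectrum $\Lambda\subset\R$ independent of the vertical line. Reading $\Lambda$ off the ordinary Dirichlet series on a line $\sigma>1$ gives $\Lambda=\{-\log n:\ a(n)\ne0\}$, which is bounded above by $-\log m$, where $m$ is the smallest index in the support of $F$; reading it off the dual expansion on a line $\sigma<0$ gives $\Lambda=\{\log n-\log q:\ a(n)\ne0\}$, which is bounded below by $\log m-\log q$. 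A subset of the discrete set $\{\pm\log n:n\ge1\}$ that is bounded both above and below is finite, so the support of $F$ is finite: $F(s)=\sum_{n\in S}a(n)n^{-s}$ is a Dirichlet polynomial, with $m=\min S$ and $N:=\max S$.

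The remaining steps are purely combinatorial. Comparing the greatest elements of the two descriptions of $\Lambda$ yields $-\log m=\log N-\log q$, hence $q=mN\in\N$ (and $Q=\sqrt{mN}$); matching the two finite sets shows that $n\mapsto q/n$ is an involution of $S$, so every $n\in S$ divides $q$ — this is (ii). Since $m$ and $N$ depend only on $F$, so does $q$; equating the two Dirichlet-polynomial representations of $F$ and using uniqueness of Dirichlet coefficients gives the self-duality relations $a(d)=\omega\,q^{-1/2}\,d\,\overline{a(q/d)}$ for $d\mid q$, from which $\omega=q^{1/2}a(m)/\bigl(m\,\overline{a(N)}\bigr)$ is also determined by $F$; thus $(q,\omega)$ is an invariant and $\Sel^{\sharp}_{0}$ is the disjoint union of the $\Sel^{\sharp}_{0}(q,\omega)$, which is (i).

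For (iii), one first checks the converse inclusion: any nonzero Dirichlet polynomial $\sum_{d\mid q}a(d)d^{-s}$ whose coefficients satisfy the self-duality relations is absolutely convergent, entire, of degree $0$, and satisfies the functional equation with data $(\sqrt q,\omega)$, hence lies in $\Sel^{\sharp}_{0}(q,\omega)$; therefore $V^{\sharp}_{0}(q,\omega)$ is precisely the set of such polynomials together with $0$. Because conjugation is $\R$-linear, the self-duality relations are $\R$-linear in the real and imaginary parts of the coefficients, so $V^{\sharp}_{0}(q,\omega)$ is a real vector space; grouping the divisors of $q$ into the pairs $\{d,q/d\}$, each pair with $d\ne q/d$ contributes two real parameters (choose $a(q/d)\in\C$ freely, whereupon $a(d)$ is forced and the companion relation is automatically satisfied because $|\omega|=1$), while the self-paired divisor $\sqrt q$, present exactly when $q$ is a perfect square, contributes one; in either parity case the total is $d(q)$, giving (iii).
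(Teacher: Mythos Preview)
The paper does not supply its own proof of this statement; Theorem~\ref{Sel_0} is quoted from \cite[Theorem~1]{k-p_i} as background, so there is no in-paper argument to compare against.

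Judged on its own merits, your outline is correct and follows the standard route. The absence of $\Gamma$-factors in degree $0$ reduces the functional equation to $F(s)=\omega\,q^{1/2-s}\overline F(1-s)$ with $q=Q^{2}$; entirety of $F$ follows as you say; the finite-order axiom (part of the definition of $\Sel^{\sharp}$, cf.\ \cite{p1}) together with boundedness on two fixed lines $\sigma=\sigma_{0}<0$ and $\sigma=\sigma_{1}>1$ and Phragm\'en--Lindel\"of gives boundedness on the whole strip $\sigma_{0}\le\sigma\le\sigma_{1}$; Bohr's theory of holomorphic almost periodic functions then forces the spectra read off the two absolutely convergent expansions to coincide, which makes the support finite, yields $q=mN\in\N$, and produces exactly the self-duality relation \eqref{coeff_0}. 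Your divisor-pairing count for $\dim_{\R}V_{0}^{\sharp}(q,\omega)=d(q)$ is also the usual one, and the check that the two relations for a pair $\{d,q/d\}$ are mutually consistent precisely because $|\omega|=1$ is the right observation.

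The only phrasing worth tightening is ``a strip straddling $\sigma=1$'': for the spectrum comparison you need a single strip that contains a line in each region of absolute convergence, i.e.\ some $\sigma_{0}<0$ and some $\sigma_{1}>1$, so that the Bohr exponents can be read off from each expansion inside the common strip where boundedness has been established.
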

		
		\begin{remark}
			As shown in \cite[\S 3]{k-p_i} the functional equation in the case $d=0$ implies the following relation on the coefficients
			\begin{equation}\label{coeff_0}
			a(n)=\frac{\omega}{\sqrt{q}}n\overline{a}\bigg(\frac{q}{n}\bigg)\quad\text{for}\quad n\mid q.
			\end{equation}
		\end{remark}
		Let now $\chi$ be a Dirichlet character modulo $q$. Denote by $\chi^*$ the primitive character inducing $\chi$ and by $f_{\chi}$ its conductor. If $\tau_{\chi^*}$ is the Gauss sum corresponding to $\chi^*$, let 
		\[
		\omega_{\chi^*}=\frac{\tau_{\chi^*}}{i^\g{a}\sqrt{f_{\chi}}}\quad\text{where}\quad\g{a}=\begin{cases}
		0\quad\text{if}\quad \chi(-1)=1\\
		1\quad\text{if}\quad \chi(-1)=-1.
		\end{cases}
		\] 
		If $d=1$ the \emph{root-number} is defined as 
		\[
		\omega^*=\omega(\beta Q^2)^{i\theta}\prod_{j=1}^{r}\lambda^{-2i\Im(\mu_j)}_j,\quad\text{where}\quad \beta=\prod_{j=1}^{r}\lambda^{2\lambda_j}_j.
		\]
		Moreover, write
		\[
		\g{X}(q,\xi)=\begin{cases}
		\Set{\chi\pmod q| \chi(-1)=1}\quad\text{if}\quad \eta=-1\\
		\Set{\chi\pmod q|\chi(-1)=-1}\quad\text{if}\quad \eta=0.
		\end{cases}
		\]
		With the above notation, a complete characterization of $\Sel^{\sharp}_1$ is given.
		\begin{theorem}[Kaczorowski-Perelli]\phantomsection\label{teor_S1}
			\begin{itemize}
				\item[(i)] Let $F\in \Sel^{\sharp}_1$. Then $q\in\N$, $\eta\in\{-1,0\}$ and the triple $(q,\xi,\omega^*)$ is an invariant. Moreover, $\Sel^{\sharp}_1$ is the disjoint union of the subclasses $\Sel^{\sharp}_1(q,\xi,\omega^*)$, with $q\in \N$, $\eta\in\{-1,0\}$, $\theta\in\R$ and $|\omega^*|=1$. 
				\item[(ii)] Let $F\in\Sel^{\sharp}_1(q,\xi,\omega^*)$, with $q$, $\xi$, $\omega^*$ as above. Then, $F(s)$ can be uniquely written as 
				\begin{equation}\label{structure_Sel_1}
				F(s)=\sum_{\chi\in \mathfrak{X}(q,\xi)}P_{\chi}(s+i\theta)L(s+i\theta,\chi^*),
				\end{equation}
				where $P_{\chi}$ is a Dirichlet polynomial in $\Sel^\sharp_0(q/f_{\chi},\omega^*\overline{\omega}_{\chi^*})$ and $L(s,\chi^*)$ the Dirichlet $L$-function associated to the primitive character $\chi^*$. 
				\item[(iii)] If $a(n)$ is the $n$-th Dirichlet coefficient of $F\in\Sel^{\sharp}_1$, then $\widetilde{a}(n)=a(n)n^{i\theta}$ is periodic of period $q$.
				\item[(iv)] $V^{\sharp}_1(q,\xi,\omega^*)=\Sel^{\sharp}_1(q,\xi,\omega^*)\cup\{0\}$ is a real vector space of dimension
				\[
				\dim V^{\sharp}_1(q,\xi,\omega^*)=\begin{cases}
				\big\lfloor\frac{q}{2}\big\rfloor\quad&\text{if}\quad \xi=-1\\ \big\lfloor\frac{q-1-\eta}{2}\big\rfloor&\text{otherwise}.
				\end{cases}
				\] 
			\end{itemize}
		\end{theorem}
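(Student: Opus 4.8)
The plan is to follow the classical two-step route: first normalise the functional equation so that it carries a single $\Gamma$-factor of Dirichlet type, and then exploit that $F$ is an honest Dirichlet series, with the \emph{linear (additive) twist} as the main analytic device. Throughout, the ambiguity of the functional equation is controlled by the following lemma: if two $\gamma$-factors $\gamma_1,\gamma_2$ both realise the functional equation of the same $F$, then $\gamma_1(s)=c\,w^{s}\gamma_2(s)$ for some $c>0$, $|w|=1$; indeed $\gamma_1/\gamma_2$ then satisfies a pure reflection $s\mapsto 1-s$, and Stirling's formula in vertical strips forces it to be of this shape. Since the degree $d=2\sum_j\lambda_j=1$ is an invariant, this already guarantees that whatever data we extract below depends only on $F$ and not on the chosen functional equation.

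\emph{Step 1 (normalising the $\gamma$-factor).} Set $\theta=-\,\Im\big(\sum_j\mu_j\big)\big/\sum_j\lambda_j$ and $G(s):=F(s-i\theta)$, so that the coefficients of $G$ are $\widetilde a(n)=a(n)n^{i\theta}$ and the $\mu_j$ are forced to be real on average. Using the Legendre--Gauss multiplication formula $\prod_{k=0}^{m-1}\Gamma\big(z+\tfrac{k}{m}\big)=(2\pi)^{(m-1)/2}m^{1/2-mz}\Gamma(mz)$ together with the uniqueness lemma above, one collapses $\prod_{j}\Gamma(\lambda_j s+\mu_j)$ into a single factor and finds that the functional equation of $G$ is equivalent to $\big(\tfrac{q}{\pi}\big)^{s/2}\Gamma\big(\tfrac{s+\mathfrak a}{2}\big)G(s)=\omega^{*}\big(\tfrac{q}{\pi}\big)^{(1-s)/2}\Gamma\big(\tfrac{1-s+\mathfrak a}{2}\big)\overline{G}(1-s)$ for some $q>0$ and some $\mathfrak a=1+\eta\in\{0,1\}$, where $\omega^{*}$ is precisely the root number defined above and $\mathfrak a$ records the parity $\eta$.

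\emph{Step 2 (integrality of $q$ and periodicity, via the twist).} This is the heart of the matter. Introduce the linear twist $G(s,\alpha)=\sum_n\widetilde a(n)e(-n\alpha)n^{-s}$ with $e(x)=e^{2\pi i x}$. Inverting the Gaussian Mellin kernel attached to $\Gamma\big(\tfrac{s+\mathfrak a}{2}\big)$ and feeding in the functional equation of $G$, one derives an Estermann-type reciprocity for $G(s,\alpha)$ showing that it continues meromorphically to $\C$, holomorphic except for a possible pole at $s=1$, and that such a pole occurs precisely when $\alpha\in\tfrac1q\Z$; conversely, since $G$ is a Dirichlet series with bounded coefficients and polynomial growth in vertical strips, the admissible set of $\alpha$ is exactly $\tfrac1q\Z$, which forces $q\in\N$. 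Tracking the residue of $G(s,p/q)$ at $s=1$ as $p$ varies then yields $\widetilde a(n)=\widetilde a(n+q)$ for every $n$, i.e. part (iii). The main obstacle is exactly this step: one must control the analytic continuation and the vertical growth of the twist sharply enough to read off simultaneously that $q$ is an integer and that $q$ is a period of $\widetilde a$.

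\emph{Step 3 (decomposition and dimension).} Periodicity gives $G(s)=q^{-s}\sum_{a\bmod q}\widetilde a(a)\,\zeta(s,a/q)$ in terms of Hurwitz zeta functions; grouping by $\gcd(a,q)$ and applying the classical Gauss-sum/orthogonality reorganisation rewrites this as $G(s)=\sum_{\chi\bmod q}P_{\chi}(s)\,L(s,\chi^{*})$, each $P_{\chi}$ a Dirichlet polynomial supported on the divisors of $q/f_{\chi}$. Comparing the functional equation of $G$ (parity $\mathfrak a$, root number $\omega^{*}$) with those of the $L(s,\chi^{*})$ forces only characters of the parity prescribed by $\mathfrak{X}(q,\xi)$ to occur and forces $P_{\chi}\in\Sel^{\sharp}_0(q/f_{\chi},\omega^{*}\overline{\omega}_{\chi^{*}})$; undoing the shift, $F(s)=G(s+i\theta)$, produces \eqref{structure_Sel_1}, and uniqueness follows from the linear independence of the $L(s,\chi^{*})$. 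Because $q,\eta,\theta$ and $\omega^{*}$ were extracted from $F$ using only the degree and the ambiguity-free part of the functional-equation data, this also yields (i). Finally, uniqueness of the decomposition gives $V^{\sharp}_1(q,\xi,\omega^{*})=\bigoplus_{\chi\in\mathfrak{X}(q,\xi)}V^{\sharp}_0(q/f_{\chi},\omega^{*}\overline{\omega}_{\chi^{*}})$, so by Theorem~\ref{Sel_0}(iii) its dimension equals $\sum_{\chi\in\mathfrak{X}(q,\xi)}d(q/f_{\chi})$; evaluating this arithmetic sum by grouping characters according to their conductor and using $\sum_{f\mid q}\varphi(f)=q$ collapses it to $\lfloor q/2\rfloor$ when $\xi=-1$ and to $\lfloor (q-1-\eta)/2\rfloor$ otherwise, which is (iv).
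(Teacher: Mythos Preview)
The paper does not contain a proof of this theorem: it is quoted verbatim from Kaczorowski and Perelli \cite[Theorem~2]{k-p_i} and no argument is supplied here. What the paper does record is that ``the main tool in the proof of Theorem~\ref{teor_S1} is the so-called \emph{linear twist}'' and that \cite[Theorem~7.1]{k-p_i} provides its meromorphic continuation to $\sigma>0$ together with the possible simple pole at $s=1-i\theta$. Your sketch follows exactly this route---normalise the $\gamma$-factor to a single Dirichlet-type factor, use the linear twist to force $q\in\N$ and the $q$-periodicity of $\widetilde a(n)$, then decompose via Hurwitz zeta functions and characters---so there is nothing to contrast at the level of strategy.

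Since there is no proof in the present paper to compare against, the only substantive comment concerns completeness. Your Step~2 is the place where the real work hides, and you acknowledge this (``the main obstacle is exactly this step''), but the sketch does not actually carry it out: the claim that the pole of $G(s,\alpha)$ at $s=1$ occurs \emph{precisely} when $\alpha\in\tfrac1q\Z$, and that this forces $q\in\N$, needs the analytic continuation of the twist across $\sigma=1$ with an explicit principal part, which is the content of \cite[Theorem~7.1]{k-p_i}. Likewise, ``tracking the residue of $G(s,p/q)$'' to deduce periodicity is not automatic---one has to show that the residue, as a function of $p$, encodes the finite Fourier transform of $\widetilde a$ and that this transform being supported on $\tfrac1q\Z$ is equivalent to $q$-periodicity. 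As a high-level outline your proposal matches the original argument; as a proof it would need these two points spelled out.
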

		\begin{remark}\label{oss_a=eta+1}
			Observe that, since all the characters in $\g{X}(q,\xi)$ have the same parity, $\g{a}$ is completely determined by $\eta$, and hence by $\xi$ which is an invariant for $F\in\Sel^{\sharp}_1$. In particular we have $\g{a}=\eta+1$. 
		\end{remark}
		The main tool in the proof of Theorem \ref{teor_S1} is the so-called \emph{linear twist}, defined for $\sigma>1$ by 
		\begin{equation}\label{linear_twist}
		F(s,\alpha)=\sum_{n=1}^{\infty}\frac{a(n)}{n^s}e(-n\alpha),
		\end{equation}
		where $F\in\Sel^{\sharp}_1$, $\alpha\in\R$ and $e(x):=e^{2\pi ix}$. In \cite[Theorem 7.1]{k-p_i}, Kaczorowski and Perelli established some analytic properties of the linear twists, such as the meromorphic continuation to the half-plane $\sigma>0$ and the possible existence of a simple pole at $s=1-i\theta$. 
				
		In \cite{k-p_iv}, Kaczorowski and Perelli showed that the theorem for the linear twists is a special case of a general result holding for $\Sel^{\sharp}_d$ for any degree $d>0$. Given $\alpha\in\R$ and $F\in\Sel^{\sharp}_d$, with $d>0$, for $\sigma>1$ they introduced the so-called \emph{standard twist}
		\begin{equation}\label{standard_twist}
		F_d(s,\alpha)=\sum_{n=1}^{\infty}\frac{a(n)}{n^s}e(-n^{1/d}\alpha),
		\end{equation}
		which corresponds to the linear twist when $d=1$. In \cite[Theorem 1]{k-p_iv}, the main analytic properties of the standard twists are described.
		
		 In \cite{grado2}, Kaczorowski and Perelli focused on degree 2 functions. They considered the standard twists of a Hecke $L$-function associated to a cusp form of half-integral weight, deriving a functional equation which can be seen as a degree 2 analogue of the Hurwitz-Lerch functional equation. In this case, the functional equation is obtained thanks to the special form of the involved $\Gamma$-factor, which enables the explicit computation of a certain hypergeometric function arising in the proof.
		 In this work we will focus on the case $d=1$. We are interested in investigating further analytic properties of the linear twists. In particular, as a first step we derive a functional equation. Then, we go on studying the growth on vertical strips and the distribution of the zeros. We remark that, thanks to the characterization given in Theorem \ref{teor_S1}, the linear twists of degree 1 functions in $\Sel^{\sharp}$ are closely related to Hurwitz-Lerch zeta functions.
		
		\section{A functional equation for the linear twist}
		Let $F\in\Sel^{\sharp}_1$ and $\alpha\in\R$. Since $F(s,\alpha)=F(s,\{\alpha\})$, we can assume $\alpha\in(0,1]$. For $\beta\in\R$, let
		\begin{equation}\label{F*}
		F_*(s,\alpha,\beta):=\sum_{n+\beta>0}\frac{\widetilde{a}(n)}{(n+\beta)^{s+i\theta}}e(-n\alpha),
		\end{equation}
		where $widetilde{a}(n)$ is as in Theorem \ref{teor_S1}. 
		\begin{remark}
			In the definition above $n\in\Z$ and $\widetilde{a}(n)$ is extend to $\Z$ by periodicity. 
		\end{remark}
		It can be easily seen that equation \eqref{F*} with $\beta=0$ coincides with the linear twist $F(s,\alpha)$. In the above notation, our goal is now proving the following result. 
		\begin{theorem}\label{th_f.e.}
			Let $F\in\Sel^{\sharp}_1$ and let $\alpha\in(0,1]$. Then, the linear twist $F(s,\alpha)$ satisfies the functional equation
			\begin{equation}\label{f.e.}
			F(1-s,\alpha)=\frac{\omega^*\Gamma(s-i\theta)q^{s-i\theta-\frac{1}{2}}}{i^{\g{a}}(2\pi)^{s-i\theta}}\bigg(e^{i\frac{\pi}{2}(s-i\theta)}\overline{F}_*(s,0,-\alpha q)+(-1)^{\g{a}}e^{-i\frac{\pi}{2}(s-i\theta)}\overline{F}_*(s,0,\alpha q)\bigg).
			\end{equation}
		\end{theorem}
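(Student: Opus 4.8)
The plan is to reduce everything to the classical Hurwitz--Lerch functional equation by exploiting the structure theorem (Theorem~\ref{teor_S1}(ii)--(iii)). First I would use part (iii): writing $\widetilde a(n)=a(n)n^{i\theta}$, which is periodic of period $q$, one can expand the linear twist in residue classes modulo $q$. Concretely, for $\sigma>1$,
\[
F(s,\alpha)=\sum_{n\ge 1}\frac{a(n)}{n^s}e(-n\alpha)=\sum_{h=1}^{q}\widetilde a(h)e(-h\alpha)\sum_{\substack{n\ge 1\\ n\equiv h\,(q)}}\frac{n^{-i\theta}}{n^{s}}e(-(n-h)\alpha)
=q^{-s-i\theta}\sum_{h=1}^{q}\widetilde a(h)e(-h\alpha)\,\Phi\!\Big(e(-q\alpha),\,s+i\theta,\,\tfrac hq\Big),
\]
where $\Phi(z,s,a)=\sum_{m\ge 0}z^m(m+a)^{-s}$ is the Lerch zeta function (here $m=(n-h)/q$ and $z=e(-q\alpha)$ lies on the unit circle, with $\alpha\in(0,1]$ ensuring we are in the admissible range). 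In this representation the quantity $F_*(s,\alpha,\beta)$ of \eqref{F*} is exactly the object obtained by shifting the $n$-variable by $\beta$, so that the $\overline F_*(s,0,\pm\alpha q)$ appearing on the right of \eqref{f.e.} is, after the same residue-class expansion, a finite combination of Lerch functions $\Phi(e(\pm q\alpha\cdot\text{something}),\cdot,\cdot)$ with the conjugated coefficients $\overline{\widetilde a}(h)$.

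Next I would invoke the Lerch functional equation (Lerch--Hurwitz), in the form
\[
\Phi(z,1-s,a)=\frac{\Gamma(s)}{(2\pi)^{s}}\Big(e\big(\tfrac14 s-a\tfrac z{|z|}\big)\,\Phi\big(e(\ldots),s,\ldots\big)+e\big(-\tfrac14 s+\ldots\big)\,\Phi\big(\ldots\big)\Big),
\]
i.e. the standard identity expressing $\Phi(e(x),1-s,a)$ as a sum of two Lerch functions with arguments $e(\mp a)$, evaluated at $s$, with a $\Gamma(s)$ factor and exponentials $e^{\pm i\pi s/2}$ — precisely the shape visible in \eqref{f.e.}. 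Substituting $s\mapsto 1-s$ in the residue-class expansion of $F(1-s,\alpha)$ and applying this identity term by term in $h$ produces a finite sum whose $\Gamma$-factor is $\Gamma(s-i\theta)$ and whose prefactor carries $q^{s-i\theta-\frac12}$ and $(2\pi)^{-(s-i\theta)}$. The two groups of terms (from the two exponentials $e^{\pm i\pi(s-i\theta)/2}$) must then be recognized, after reindexing $h\mapsto \pm h$ and using the reflection behaviour of $\widetilde a$ together with the parity datum $\g a$ (Remark~\ref{oss_a=eta+1}), as $\overline F_*(s,0,-\alpha q)$ and $(-1)^{\g a}\overline F_*(s,0,\alpha q)$ respectively. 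This is where one needs the functional-equation relation on the coefficients: the root number $\omega^*$ and the factor $i^{-\g a}$ emerge from combining the local root numbers $\omega_{\chi^*}=\tau_{\chi^*}/(i^{\g a}\sqrt{f_\chi})$ in \eqref{structure_Sel_1} with the Gauss-sum evaluation implicit in the Hurwitz formula for $L(s,\chi^*)$ — equivalently, from the degree-$0$ coefficient relation \eqref{coeff_0} applied to each $P_\chi\in\Sel^\sharp_0(q/f_\chi,\omega^*\overline\omega_{\chi^*})$.

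An alternative, perhaps cleaner, route is to not pass through Lerch at all but to use directly the known analytic continuation of the linear twist from \cite[Theorem 7.1]{k-p_i} (or the standard-twist result \cite[Theorem 1]{k-p_iv}): one writes $F(s,\alpha)=\gamma$-factor times a Mellin transform, inserts the functional equation $\Phi(s)=\omega\overline\Phi(1-s)$ of $F$ itself, and unfolds. In degree $1$ the $\Gamma$-factor is a single $\Gamma(\lambda s+\mu)$ with $\lambda=\tfrac12$, so the inverse Mellin transform that appears can be evaluated in closed form (this is the degree-$1$ analogue of the half-integral-weight computation of \cite{grado2}, but here the relevant integral is elementary rather than a genuine hypergeometric function), and the incomplete shifts $n+\beta$ with $\beta=\pm\alpha q$ arise naturally from the exponential $e(-n\alpha)$ interacting with the $q$-periodicity. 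Either way, the main obstacle I expect is bookkeeping: tracking precisely how the root number $\omega^*$, the power $q^{s-i\theta-\frac12}$, the shift $i\theta$ in the $\Gamma$-argument, and the parity sign $(-1)^{\g a}$ assemble correctly — i.e. matching constants and exponential phases — rather than any deep analytic difficulty, since convergence and continuation are already guaranteed by $\alpha\in(0,1]$ and by Theorems~\ref{Sel_0} and~\ref{teor_S1}. I would therefore organize the proof as: (1) residue-class / structure-theorem expansion; (2) citation and application of the Lerch (Hurwitz) functional equation; (3) careful reindexing and constant-chasing to collapse the result into the two $\overline F_*$ terms; (4) verification that the meromorphic identity, proved first for $s$ in a suitable strip, extends to all of $\C$ by analytic continuation.
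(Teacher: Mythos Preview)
Your plan is correct and uses the same core ingredients as the paper---the structure theorem and the Hurwitz--Lerch functional equation---but the paper organizes the reduction differently. Rather than expanding $F(s,\alpha)$ in residue classes via periodicity (Theorem~\ref{teor_S1}(iii)) as you propose, it works through the character decomposition \eqref{structure_Sel_1} in stages: first it proves a functional equation for the linear twist $L(s,\chi,\alpha)$ of a single primitive Dirichlet $L$-function (writing it as a combination of Hurwitz--Lerch functions and applying \eqref{h-l}); then it combines this with the degree-$0$ coefficient relation \eqref{coeff_0} for each $P_\chi\in\Sel^\sharp_0(q/f_\chi,\omega^*\overline\omega_{\chi^*})$ to obtain a functional equation for $F_\chi(s,\alpha)=P_\chi(s)L(s,\chi^*,\alpha)$; finally it sums over $\chi\in\mathfrak X(q,\xi)$. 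The payoff of this character-by-character route is that the step you correctly identify as the main obstacle---producing the \emph{conjugated} coefficients $\overline{\widetilde a}$ and the constants $\omega^*$, $i^{-\mathfrak a}$, $q^{s-i\theta-1/2}$---becomes mechanical: the Hurwitz--Lerch equation applied to $L(s,\chi^*,\alpha)$ automatically outputs $\overline{\chi}^*$, and the relation \eqref{poly} on the $P_\chi$-coefficients supplies the remaining factor $\omega^*\overline\omega_{\chi^*}/\sqrt{q/f_\chi}$, after which the identities $\overline\omega_{\chi^*}\tau_{\chi^*}\chi(-1)=i^{-\mathfrak a}\sqrt{f_\chi}$ collapse everything to the stated form. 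In your direct residue-class approach this same step would require a reflection formula relating $\widetilde a(h)$ to $\overline{\widetilde a}(q-h)$, and the most natural way to obtain that is to pass through the character decomposition anyway, so the two arguments converge. Your alternative Mellin-transform route (unfolding the functional equation of $F$ itself) is not what the paper does.
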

		\begin{remark}
			Observe that the functional equation \eqref{f.e.} is not of Riemann type, even if it still reflects $s$ into $1-s$. As explained below, it can be seen as a Hurwitz-Lerch type functional equation. 
		\end{remark}
		The key point to derive \eqref{f.e.} is Theorem \ref{teor_S1}, in particular expression \eqref{structure_Sel_1}. So, assume that for any $\chi\in\g{X}(q,\xi)$, $P_{\chi}$ is a Dirichlet polynomial with coefficients $c(n)=c_{\chi}(n)$ for $n\mid \frac{q}{f_{\chi}}$. We rewrite the linear twist as 
		\[
		\begin{split}
		F(s,\alpha)&=\sum_{\chi\in \mathfrak{X}(q,\xi)}\sum_{\substack{n\mid q/f_{\chi}\\m\geq 1}}\frac{c(n)\chi^*(m)}{(mn)^{s+i\theta}}e(-mn\alpha)
		=\sum_{\chi\in \mathfrak{X}(q,\xi)}\sum_{n\mid q/f_{\chi}}\frac{c(n)}{n^{s+i\theta}}L(s+i\theta,\chi^*,n\alpha),
		\end{split}
		\]
		where $L(s+i\theta,\chi^*,n\alpha)=\sum_{m\geq 1}\frac{\chi^*(m)}{m^{s+i\theta}}e(-mn\alpha)$ is the linear twist of the Dirichlet $L$-function associated to the primitive character $\chi^*$.
		\subsection{A functional equation for $L(s,\chi,\alpha)$}
		As a first step, we derive a functional equation for the linear twist of a Dirichlet $L$-function associated to a primitive character. So, let $\chi$ be a primitive Dirichlet character modulo $q$ and let $\alpha\in\R$. Using the orthogonality properties of characters we get
		\begin{equation}\label{L_hurwitz_combination}
		\begin{split}
		L(s,\chi,\alpha)&=\sum_{n=1}^{\infty}\frac{\chi(n)}{n^s}e(-n\alpha)=\sum_{n=1}^{\infty}\frac{\chi(n)}{n^s}e(-n\{\alpha\})
		=\frac{1}{\tau_{\overline{\chi}}}\sum_{a=0}^{q-1}\overline{\chi}(a)\zeta_L(s,a/q-\{\alpha\},0),
		\end{split}\end{equation}
		where $\zeta_L(s,x,y)$ is the \emph{Hurwitz-Lerch zeta function} defined as
		\begin{equation}\label{def_h-l}
		\zeta_L(s,x,y)=\sum_{n>-\{y\}}\frac{e(n\{x\})}{(n+\{y\})^s},
		\end{equation}
		for $x,y\in\R$ and $n\in\Z$. It is well known that the Hurwitz-Lerch zeta function can be analytically continued to a holomorphic function in $\C$ with a possible simple pole of residue 1 at $s=1$ if and only if $x\in\Z$. Moreover, it satisfies a functional equation of the form
		\begin{equation}\label{h-l}
		\zeta_L(1-s,x,y)=\frac{\Gamma(s)}{(2\pi)^s}\bigg(e^{i\frac{\pi}{2}s-2\pi i\{x\}\{y\}}\zeta_L(s,-y,x)+e^{-i\frac{\pi}{2}s+2\pi i\{-x\}\{y\}}\zeta_L(s,y,-x)\bigg).
		\end{equation}
		We refer e.g. to Garunkstis-Laurincikas \cite{g-l} for a detailed discussion on the properties of the Hurwitz-Lerch zeta function.
		Now, for $\alpha,\beta\in\R$, we introduce the notation 
		\[
		L_*(s,\chi,\alpha,\beta)=\sum_{n+\beta>0}\frac{\chi(n)}{(n+\beta)^s}e(-n\alpha),
		\]
		observing that $L_*(s,\chi,\alpha,0)=L(s,\chi,\alpha)$. Then, the following result holds.
		\begin{theorem}\label{th_L-function}
			Let $L(s,\chi)$ be the Dirichlet $L$-function associated to the  primitive character $\chi$ modulo $q$.  Then, given $\alpha\in\R$, the linear twist $L(s,\chi,\alpha)$ admits a meromorphic continuation to $\C$ with a possible simple pole at $s=1$. Moreover, it satisfies the functional equation 
			\begin{equation}\label{f.e.L_function}
			L_*(1-s,\chi,\alpha,0)=\frac{\Gamma(s)\tau_{\chi}\chi(-1)}{(2\pi)^sq^{1-s}}\bigg(e^{i\frac{\pi}{2}s}L_*(s,\overline{\chi},0,-\alpha q)+\chi(-1)e^{-i\frac{\pi}{2}s}L_*(s,\overline{\chi},0,\alpha q)\bigg).
			\end{equation}
		\end{theorem}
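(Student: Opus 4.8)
The plan is to derive both assertions from the Hurwitz--Lerch decomposition \eqref{L_hurwitz_combination} combined with the functional equation \eqref{h-l}. For the analytic continuation, recall that $\zeta_L(s,x,y)$ is entire in $s$ apart from a simple pole of residue $1$ at $s=1$, present exactly when $x\in\Z$. Since $a/q-\{\alpha\}\in\Z$ for at most one $a\in\{0,\dots,q-1\}$, the finite sum on the right of \eqref{L_hurwitz_combination} provides the meromorphic continuation of $L(s,\chi,\alpha)=L_*(s,\chi,\alpha,0)$ to $\C$ with at most a simple pole at $s=1$.

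For the functional equation, replace $s$ by $1-s$ in \eqref{L_hurwitz_combination} and apply \eqref{h-l} to each summand with $x=a/q-\{\alpha\}$ and $y=0$. Since $\{y\}=0$, the exponential cross-terms in \eqref{h-l} disappear and it collapses to
\[
\zeta_L(1-s,x,0)=\frac{\Gamma(s)}{(2\pi)^s}\Big(e^{i\frac{\pi}{2}s}\,\zeta_L(s,0,x)+e^{-i\frac{\pi}{2}s}\,\zeta_L(s,0,-x)\Big),
\]
where, by \eqref{def_h-l}, $\zeta_L(s,0,y)=\sum_{n>-\{y\}}(n+\{y\})^{-s}$ is a Hurwitz-type value. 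Hence
\[
L_*(1-s,\chi,\alpha,0)=\frac{\Gamma(s)}{\tau_{\overline{\chi}}(2\pi)^s}\sum_{a=0}^{q-1}\overline{\chi}(a)\Big(e^{i\frac{\pi}{2}s}\zeta_L(s,0,a/q-\{\alpha\})+e^{-i\frac{\pi}{2}s}\zeta_L(s,0,\{\alpha\}-a/q)\Big).
\]

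The core of the argument is to recognise the two inner sums as twisted Dirichlet $L$-series. We may assume $\alpha\in[0,1)$; the term $a=0$ then drops out when $q>1$ (and the single term $a=0$ for $q=1$ is handled by the same substitution). For $1\le a\le q-1$ split according to whether $a/q>\alpha$ or $a/q<\alpha$: in each case $\{a/q-\alpha\}$, resp. $\{\alpha-a/q\}$, takes the evident value, so writing the Hurwitz-type series over $m\ge 0$ (or $m\ge 1$ in the degenerate case) and setting $n=mq+a$, resp. $n=mq-a$ after absorbing the extra shift by $q$ occurring in one sub-case, turns them into
\[
\zeta_L(s,0,a/q-\alpha)=q^s\sum_{\substack{n\equiv a\ (q)\\ n>\alpha q}}\frac{1}{(n-\alpha q)^s},\qquad
\zeta_L(s,0,\alpha-a/q)=q^s\sum_{\substack{n\equiv -a\ (q)\\ n>-\alpha q}}\frac{1}{(n+\alpha q)^s}.
\]
The two sub-cases merge into a single arithmetic progression precisely because in each the lower summation bound is the least integer of the prescribed residue class lying above $\pm\alpha q$. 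Summing over $a$, using the $q$-periodicity of $\overline{\chi}$, and re-indexing the second sum by $n\equiv-a$ (which produces a factor $\overline{\chi}(-1)=\chi(-1)$), we get
\[
\sum_{a=0}^{q-1}\overline{\chi}(a)\,\zeta_L(s,0,a/q-\{\alpha\})=q^s\,L_*(s,\overline{\chi},0,-\alpha q),\qquad
\sum_{a=0}^{q-1}\overline{\chi}(a)\,\zeta_L(s,0,\{\alpha\}-a/q)=\chi(-1)\,q^s\,L_*(s,\overline{\chi},0,\alpha q).
\]
Substituting these back yields \eqref{f.e.L_function} up to the scalar $q^s/\tau_{\overline{\chi}}$, which equals $\tau_\chi\chi(-1)/q^{1-s}$ by the classical Gauss-sum identity $\tau_\chi\tau_{\overline{\chi}}=\chi(-1)q$ valid for primitive $\chi$.

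I expect the only genuinely delicate step to be the re-indexing just described: one must track the fractional parts and the minimal element of each residue class carefully so that the ranges $a/q>\alpha$ and $a/q<\alpha$ glue together without gaps or overlaps, and separately check the boundary case $\alpha q\in\Z$ — harmless, since there the relevant Hurwitz-type series degenerates to $\zeta(s)$ but is annihilated by $\overline{\chi}$ unless $\gcd(\alpha q,q)=1$, in which case the shifted progression still produces the correct term and accounts for the possible pole at $s=1$. The remaining manipulations with $\Gamma(s)$, $(2\pi)^{-s}$ and powers of $q$ are routine.
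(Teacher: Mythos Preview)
Your proposal is correct and follows essentially the same route as the paper: express $L(s,\chi,\alpha)$ via the Hurwitz--Lerch decomposition \eqref{L_hurwitz_combination}, apply the functional equation \eqref{h-l} with $y=0$, re-index the resulting Hurwitz-type sums into the shifted series $L_*(s,\overline{\chi},0,\pm\alpha q)$, and finish with the Gauss-sum identity $\tau_\chi\tau_{\overline{\chi}}=\chi(-1)q$. Your bookkeeping of the factor $q^s$ and of the $\chi(-1)$ arising from the substitution $n\equiv -a\pmod q$ in the second sum is in fact a bit more explicit than the paper's presentation, but the argument is identical.
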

		\begin{proof}
			Writing $L(s,\chi,\alpha)$ as a linear combination of Hurwitz-Lerch zeta function as in \eqref{L_hurwitz_combination}, we deduce that the linear twist can be extended to a meromorphic function on $\C$ with a possible simple pole at $s=1$. Given $a\in\{0,\dots,q-1\}$ with $(a,q)=1$, the pole at $s=1$ of $\zeta_L(s,a/q-\{\alpha\},0)$ exists if and only if $\frac{a}{q}-\{\alpha\}\in\Z$. Then, $L(s,\chi,\alpha)$ has a pole at $s=1$ if and only if $\chi(\alpha q)\ne 0$ (we assume that $\chi(x)=0$ if $x\notin\Z$). The residue is $\res_{s=1}L(s,\chi,\alpha)=\frac{\overline{\chi}(\alpha q)}{\tau_{\overline{\chi}}}$.\\
			On the other hand, by \eqref{L_hurwitz_combination} and \eqref{h-l}, we get 
			\begin{equation}\label{f.e.L}	\begin{split}
			&L(1-s,\chi,\alpha)=\frac{1}{\tau_{\overline{\chi}}}\sum_{a=0}^{q-1}\overline{\chi}(a)\zeta_L(1-s,a/q-\{\alpha\},0)\\&
			=\frac{1}{\tau_{\overline{\chi}}}\sum_{a=0}^{q-1}\overline{\chi}(a)\frac{\Gamma(s)}{(2\pi)^s}\bigg(e^{i\frac{\pi}{2}s}\zeta_L(s,0,a/q-\{\alpha\})+e^{-i\frac{\pi}{2}s}\zeta_L(s,0,-a/q+\{\alpha\})\bigg)
			\end{split}
			\end{equation}
			Let now
			\[
			\Sigma_1=\sum_{a=0}^{q-1}\overline{\chi}(a)\zeta_L(s,0,a/q-\{\alpha\})
			\quad\text{and}\quad\Sigma_2=\sum_{a=0}^{q-1}\overline{\chi}(a)\zeta_L(s,0,-a/q+\{\alpha\}).
			\]
			Observing that $\frac{a}{q}-\{\alpha\}\in (-1,1)$ and properly rearranging the sums, we get
			\[
			\Sigma_1=\sum_{\substack{m>\alpha q\\ \{\frac{m}{q}\}>\{\alpha\}}}\frac{\overline{\chi}(m)}{(m-\alpha q)^s}+\sum_{\substack{m> \alpha q\\\{\frac{m}{q}\}\leq\{\alpha\}}}\frac{\overline{\chi}(m)}{(m-\alpha q)^s}=\sum_{m> \alpha q}\frac{\overline{\chi}(m)}{(m-\alpha q)^s}=L_*(s,\overline
			\chi,0,-\alpha q),
			\]
			and similarly
			\[
			\Sigma_2=\sum_{\substack{m>-\alpha q\\\{-\frac{m}{q}\}\geq\{\alpha\}}}\frac{\overline{\chi}(m)}{(m+\alpha q)^s}+\sum_{\substack{m>-\alpha q\\\{-\frac{m}{q}\}<\{\alpha\}}}\frac{\overline{\chi}(m)}{(m+\alpha  q)^s}=\sum_{m>-\alpha q}\frac{\overline{\chi}(m)}{(m+\alpha  q)^s}=L_*(s,\overline{\chi},0,\alpha q). 
			\]
			Then, recalling that $\frac{\tau_{\chi}\tau_{\overline{\chi}}}{q}=\chi(-1)$, equation \eqref{f.e.L} can be rewritten as 
			\[
			L_*(1-s,\chi,\alpha,0)=\frac{\Gamma(s)}{(2\pi)^s}\frac{\tau_{\chi}\chi(-1)}{q^{1-s}}\bigg(e^{i\frac{\pi}{2}s}L_*(s,\overline{\chi},0,-\alpha q)+\chi(-1)e^{-i\frac{\pi}{2}s}L_*(s,\overline{\chi},0,\alpha q)\bigg).
			\]\end{proof}
		\begin{remark}
			It can be noticed that \eqref{f.e.L_function} has a shape which is similar to \eqref{h-l}. For this reason, we say that \eqref{f.e.L_function} is a Hurwitz-Lerch type of functional equation. The same holds for \eqref{f.e.}.
		\end{remark}
		\subsection{A functional equation for the linear twist of $P_\chi(s)L(s,\chi^*)$}
		Let now $F\in\Sel^{\sharp}_1$ and write  \begin{equation}\label{sumF_chi}
		F(s)=\sum_{\chi\in\mathfrak{X}(q,\xi)}F_{\chi}(s+i\theta),\quad\text{where}\quad F_{\chi}(s):=P_{\chi}(s)L(s,\chi^*).
		\end{equation} Given $\chi\in\g{X}(q,\xi)$, assume that $P_{\chi}(s)=\sum_{n\mid q/f_{\chi}}\frac{c(n)}{n^s}$. The linear twist of $F_{\chi}(s)$ becomes 
		\[
		\begin{split}
		F_{\chi}(s,\alpha)&:=\sum_{\substack{n\mid q/f_{\chi}\\m\geq 1}}\frac{c(n)\chi^*(m)}{(mn)^s}e(-mn\alpha)
		=\sum_{n\mid q/f_{\chi}}\frac{c(n)}{n^{s}}L(s,\chi^*,n\alpha).
		\end{split}
		\]
		We recall that the following relation holds
		\begin{equation}\label{poly}
		\frac{c(n)}{n}=\frac{\omega^*{\overline{\omega}}_{\chi^*}}{\sqrt{q/f_{\chi}}}\overline{c}\bigg(\frac{q}{nf_{\chi}}\bigg),
		\end{equation}
		coming from the functional equation for $\Sel^{\sharp}_0$ (cf. \eqref{coeff_0}). Combining \eqref{poly} with \eqref{f.e.L_function}, we get
		\[
		\begin{split}
		&F_{\chi}(1-s,\alpha)=\sum_{n\mid q/f_{\chi}}\frac{c(n)}{n^{1-s}}L_*(1-s,\chi^*,n\alpha,0)\\&=
		\sum_{n\mid q/f_{\chi}}\frac{c(n)}{n^{1-s}}\bigg(\frac{\Gamma(s)}{(2\pi)^s}\frac{\tau_{{\chi}^*}\chi(-1)}{f_{\chi}^{1-s}}\bigg(e^{i\frac{\pi}{2}s}L_*(s,\overline{\chi}^*,0,-\alpha nf_{\chi})+\chi(-1)e^{-i\frac{\pi}{2}s}L_*(s,\overline{\chi}^*,0,\alpha nf_{\chi})\bigg)\bigg)
		\\&=\frac{\omega^*f_{\chi}^s}{i^{\mathfrak{a}}\sqrt{q}}\frac{\Gamma(s)}{(2\pi)^s}
		\bigg(e^{i\frac{\pi}{2}s}\sum_{\substack{n\mid q/f_{\chi}\\m/n-\alpha f_{\chi}>0}}\frac{\overline{c}(q/nf_{\chi})\overline{\chi}^*(m)}{(m/n-\alpha f_{\chi})^s}+\chi(-1)e^{-i\frac{\pi}{2}s}\sum_{\substack{n\mid q/f_{\chi}\\m/n+\alpha f_{\chi}>0}}\frac{\overline{c}(q/nf_{\chi})\overline{\chi}^*(m)}{(m/n+\alpha f_{\chi})^s}\bigg)\\&=\frac{\omega^*q^{s-\frac{1}{2}}\Gamma(s)}{i^{\mathfrak{a}}(2\pi)^s}
		\bigg(e^{i\frac{\pi}{2}s}\sum_{\substack{n\mid q/f_{\chi}\\mn-\alpha q>0}}\frac{\overline{c}(n)\overline{\chi}^*(m)}{(mn-\alpha q)^s}+\chi(-1)e^{-i\frac{\pi}{2}s}\sum_{\substack{n\mid q/f_{\chi}\\mn+\alpha q>0}}\frac{\overline{c}(n)\overline{\chi}^*(m)}{(mn+\alpha q)^s}\bigg),
		\end{split}
		\]where we rearranged the sum over $n$, observing that if  $n\mid \frac{q}{f_{\chi}}$, then $n'=\frac{q}{nf_{\chi}}$ also divides $\frac{q}{f_{\chi}}$. Moreover, we used the following identities \[\overline{\omega}_{\chi^*}=\omega_{\overline{\chi}^*}=\frac{\tau_{\overline{\chi}^*}}{i^{\mathfrak{a}}\sqrt{f_{\chi}}}\quad\text{and}\quad\frac{\tau_{\chi^*}\tau_{\overline{\chi}^*}}{f_{\chi}}\chi(-1)=1.\]
		For $\alpha\in(0,1]$ and $\beta\in\R$, let
		\[
		F_{\chi}(s,\alpha,\beta):=\sum_{\substack{n\mid q/f_{\chi}\\mn+\beta>0}}\frac{c(n)\chi^*(m)}{(mn+\beta)^s}e(-mn\alpha).
		\]
		It can be easily observed that
		\[
		\sum_{\substack{n\mid q/f_{\chi}\\mn\pm\alpha q>0}}\frac{\overline{c}(n)\overline{\chi}^*(m)}{(mn\pm\alpha q)^s}=\overline{F}_{\chi}(s,0,\pm\alpha q).
		\]
		Then, for the linear twist of $F_{\chi}(s)$ we derive the functional equation 
		\begin{equation}\label{f.e.F_chi}
		F_{\chi}(1-s,\alpha)=\frac{\omega^*q^{s-\frac{1}{2}}\Gamma(s)}{i^{\mathfrak{a}}(2\pi)^s}(e^{i\frac{\pi}{2}s}\overline
		{F}_{\chi}(s,0,-\alpha q)+\chi(-1)e^{-i\frac{\pi}{2}s}\overline{F}_{\chi}(s,0,\alpha q)).
		\end{equation}
		\subsection{Proof of Theorem \ref{th_f.e.}}
		Let now $F\in\Sel^{\sharp}_1$.
		The functional equation for $F(s,\alpha)$ comes from the results of the previous sections. By \eqref{sumF_chi}, we have
		\[
		\begin{split}
		&F(1-s,\alpha)=\sum_{\chi\in \mathfrak{X}(q,\xi)}F_{\chi}(1-s+i\theta,\alpha)\\&=\sum_{\chi}\bigg(\frac{\omega^*\Gamma(s-i\theta)q^{s-i\theta-\frac{1}{2}}}{i^{\g{a}}(2\pi)^{s-i\theta}}\big(e^{i\frac{\pi}{2}{s-i\theta}}\overline{F}_{\chi}(s-i\theta,0,-\alpha q)+\chi(-1)e^{-i\frac{\pi}{2}{s-i\theta}}\overline{F}_{\chi}(s-i\theta,0,\alpha q)\big)\bigg)\\&
		=\frac{\omega^*\Gamma(s-i\theta)q^{s-i\theta-\frac{1}{2}}}{i^{\g{a}}(2\pi)^{s-i\theta}}\bigg(e^{i\frac{\pi}{2}(s-i\theta)}\overline{F}_*(s,0,-\alpha q)+(-1)^{\g{a}}e^{-i\frac{\pi}{2}(s-i\theta)}\overline{F}_*(s,0,\alpha q)\bigg),
		\end{split}
		\]
		since we have 
		\[
		\sum_{\chi\in\g{X}(q,\xi)}\overline{F}_{\chi}(s-i\theta,0,\pm \alpha q)=\sum_{\chi\in \mathfrak{X}(q,\xi)}\sum_{\substack{n\mid q/f_{\chi}\\mn+\pm \alpha q>0}}\frac{\overline{c}(n)\overline{\chi}^*(m)}{(mn\pm\alpha q)^{s-i\theta}}=\overline{F}_*(s,0,\pm \alpha q). 
		\]
		\begin{remark}
			As already observed, it is well-known that the linear twist $F(s,\alpha)$ has a meromorphic continuation to $\C$ with a possible simple pole at $s=1-i\theta$. We now briefly sketch the calculation of the residue. Recall that
			\[
			F(s,\alpha)=\sum_{\chi\in \mathfrak{X}(q,\xi)}\sum_{n\mid q/f_{\chi}}\frac{c(n)}{n^{s+i\theta}}L(s+i\theta,\chi^*,n\alpha).
			\]
			Since $\res_{s=1-i\theta}L(s+i\theta,\chi^*,\alpha n)=\frac{\overline{\chi}^*(\alpha nf_{\chi})}{\tau_{\overline{\chi}^*}}$, using again \eqref{poly} and writing $m=\frac{q}{nf_{\chi}}$, we get 
			\[
			\begin{split}
			\res_{s=1-i\theta}F(s,\alpha)&=\sum_{\chi\in \mathfrak{X}(q,\xi)}\sum_{n\mid q/f_{\chi}}\frac{c(n)}{n}\frac{\overline{\chi}^*(\alpha nf_{\chi})}{\tau_{\overline{\chi}^*}}\\&
			=\frac{\omega^*}{i^{\g{a}}\sqrt{q}}\sum_{\chi\in\g{X}(q,\xi)}\sum_{m\mid q/f_{\chi}}\overline{c}(m)\overline{\chi}^*\bigg(\frac{\alpha q}{m}\bigg)=\frac{\omega^*}{i^{\g{a}}\sqrt{q}}\overline{\widetilde{a}(\alpha q)}. 
			\end{split}
			\]
			Note that, as stated in \cite[Theorem 1]{k-p_iv}, the pole exists if and only if $a(\alpha q)\ne 0$. 
		\end{remark}
		\section{The order of growth}
		The functional equation allows us to go on studying the analytic properties of the linear twist. We start investigating the order of growth on vertical strips. It is already known by \cite[Theorem 2]{k-p_iv} that the linear twist has polynomial growth on vertical strips. However, we consider the Lindel\"{o}f function associated to $F(s,\alpha)$,
		\begin{equation}\label{lindelof}
		\mu(\sigma,\alpha)=\inf\set{\xi\in\R| F(\sigma+it,\alpha)\ll |t|^{\xi}\text{ as }|t|\to +\infty}. 
		\end{equation}
		Let now 
		\[
		\mu^*(\sigma,\alpha):=\max(\mu^+(\sigma,\alpha),\mu^-(\sigma,\alpha)), \] where \[\mu^{\pm}(\sigma,\alpha):=\inf\set{\xi\in\R | \overline{F}_*(\sigma+it,0,\mp\alpha q)\ll |t|^{\xi}\text{ as } t\to \pm\infty}.
		\]
		In the above setting, one can deduce the following corollary. 
		\begin{corollary}\label{th_poly_growth}
			Let $F\in\Sel^{\sharp}_1$ and $\alpha\in(0,1]$. Then , the linear twist $F(s,\alpha)$ has polynomial growth on vertical strips and the corresponding Lindel\"{o}f function satisfies 
			\begin{equation}\label{lind}
			\mu(\sigma,\alpha)=\frac{1}{2}-\sigma+\mu^*(1-\sigma,\alpha).
			\end{equation}
		\end{corollary}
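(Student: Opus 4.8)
The plan is to read the functional equation \eqref{f.e.} as an exact, two-sided asymptotic identity, estimate its right-hand side by Stirling's formula, and then pass to Lindel\"{o}f exponents by taking infima. First I would record the only external input needed: all of $F(s,\alpha)$ and $\overline{F}_*(s,0,\pm\alpha q)$ are of polynomial growth on vertical strips, by \cite[Theorem 2]{k-p_iv} for the first and, for the others, by the same theorem or directly from the expression of $F_*$ as a finite linear combination of shifted linear twists of Dirichlet $L$-functions coming from \eqref{structure_Sel_1}. Hence $\mu(\sigma,\alpha)$, $\mu^{\pm}(\sigma,\alpha)$ and $\mu^*(\sigma,\alpha)$ are finite real numbers; the condition defining $\mu(\sigma,\alpha)$ decouples into the two one-sided conditions $t\to+\infty$ and $t\to-\infty$, whose maximum $\mu(\sigma,\alpha)$ is, and the single possible pole is irrelevant for growth as $|t|\to\infty$.

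Next I would feed Stirling's formula into \eqref{f.e.}. Writing $s=\sigma+it$, one has $|\Gamma(s-i\theta)|\asymp|t|^{\sigma-\frac{1}{2}}e^{-\frac{\pi}{2}|t|}$ as $|t|\to\infty$, while $q^{s-i\theta-\frac{1}{2}}$, $(2\pi)^{s-i\theta}$ and the constants $\omega^*$, $i^{\g a}$, $(-1)^{\g a}$ are of modulus bounded above and below on the line, and $|e^{\pm i\frac{\pi}{2}(s-i\theta)}|\asymp e^{\mp\frac{\pi}{2}t}$. Thus in \eqref{f.e.} the coefficient of $\overline{F}_*(s,0,-\alpha q)$ has modulus $\asymp|t|^{\sigma-\frac{1}{2}}e^{-\frac{\pi}{2}(|t|+t)}$ and the coefficient of $\overline{F}_*(s,0,\alpha q)$ has modulus $\asymp|t|^{\sigma-\frac{1}{2}}e^{-\frac{\pi}{2}(|t|-t)}$; so on each of the two vertical half-lines exactly one of the two terms survives at polynomial size $\asymp|t|^{\sigma-\frac{1}{2}}$ and the other is exponentially suppressed, and since $\overline{F}_*$ itself grows only polynomially the suppressed term may be discarded. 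In particular the two surviving contributions occur in disjoint ranges of $t$, so there is no cancellation and \eqref{f.e.} delivers a genuine two-sided bound
\[
\bigl|F(1-s,\alpha)\bigr|\;\asymp\;|t|^{\sigma-\frac{1}{2}}\,\bigl|\overline{F}_*(s,0,\beta)\bigr|\qquad(|t|\to\infty),
\]
with $\beta=\alpha q$ when $t\to+\infty$ and $\beta=-\alpha q$ when $t\to-\infty$, the lower bound coming from the triangle inequality applied after dropping the suppressed term.

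I would then translate this into \eqref{lind}. Since $1-s=(1-\sigma)-it$, the left-hand side of the display is $F$ on the line $\Re=1-\sigma$ with imaginary part of sign opposite to $t$; rewriting the display with $1-\sigma$ as the fixed abscissa, each one-sided Lindel\"{o}f exponent of $F(\cdot,\alpha)$ on a vertical line equals $\frac{1}{2}$ minus that line's abscissa plus the corresponding one-sided growth exponent of $\overline{F}_*$ on the reflected line at one of the arguments $\pm\alpha q$ --- that is, plus $\mu^{\pm}(1-\sigma,\alpha)$. The upper bound $\mu(\sigma,\alpha)\le\frac{1}{2}-\sigma+\mu^*(1-\sigma,\alpha)$ follows by taking any $\xi>\mu^*(1-\sigma,\alpha)$, for which both $\overline{F}_*$-terms are $\ll|t|^{\xi}$ in their respective directions and the display gives $F(\sigma+it,\alpha)\ll|t|^{\frac{1}{2}-\sigma+\xi}$. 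For the reverse inequality, given $\xi<\mu^*(1-\sigma,\alpha)$ I would pick the direction realising $\mu^*=\max(\mu^+,\mu^-)$ and, by the definition of the infimum, a sequence $t_k$ in that direction along which the relevant $\overline{F}_*$-term is $\gg|t_k|^{\xi}$; the display then forces $F(\sigma+it_k,\alpha)\gg|t_k|^{\frac{1}{2}-\sigma+\xi}$, the discarded term being $O(|t_k|^{-A})$ for every $A$, whence $\mu(\sigma,\alpha)\ge\frac{1}{2}-\sigma+\xi$. Letting $\xi\uparrow\mu^*(1-\sigma,\alpha)$ and combining with the upper bound yields \eqref{lind}.

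I do not expect a serious obstacle: the estimate is an elementary application of Stirling's formula and the rest is a clean manipulation of infima. The one place that genuinely needs care is the sign-and-direction bookkeeping --- tracking how the reflection $s\mapsto 1-s$ reverses $\Im s$ and thereby interchanges which of $e^{i\frac{\pi}{2}(s-i\theta)}\overline{F}_*(s,0,-\alpha q)$ and $e^{-i\frac{\pi}{2}(s-i\theta)}\overline{F}_*(s,0,\alpha q)$ is dominant on a given vertical half-line, so that the two one-sided Lindel\"{o}f exponents of $F$ get paired with $\mu^{+}(1-\sigma,\alpha)$ and $\mu^{-}(1-\sigma,\alpha)$ and not crossed --- together with keeping in mind that it is precisely the exponential suppression of the subdominant term that upgrades ``$\ll$'' to the ``$\asymp$'' making \eqref{lind} an equality rather than merely an inequality.
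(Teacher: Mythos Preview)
Your proposal is correct and follows exactly the route the paper indicates: the paper's own proof is a one-line reference to ``standard methods, combining the functional equation \eqref{f.e.} with Stirling's formula for the $\Gamma$-factor'', and you have written out precisely that computation, including the two-sided asymptotic, the exponential suppression of the subdominant term on each vertical half-line, and the passage to infima. Your closing remark that the only genuinely delicate point is the sign-and-direction bookkeeping (pairing the half-lines of $F$ with the correct $\overline{F}_*(\cdot,0,\pm\alpha q)$ after the reflection $s\mapsto 1-s$) is exactly right and is the one thing the paper's terse proof leaves implicit.
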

		\begin{proof}
			The result easily follows with standard methods, combining the functional equation \eqref{f.e.} with Stirling's formula for the $\Gamma$-factor. 
		\end{proof}
		\begin{remark}
			Since the linear twist $F(s,\alpha)$ is absolutely convergent for $\sigma>1$ and the Lindel\"{o}f function is continuous, we get
			\[
			\mu(\sigma,\alpha)=\begin{cases}
			0 \quad&\text{if}\quad \sigma\geq 1\\
			\frac{1}{2}-\sigma& \text{if}\quad \sigma\leq 0.
			\end{cases}
			\]
			Moreover, by the convexity of the Lindel\"of function we deduce the upper bound
			\[
			\mu(\sigma,\alpha)\leq \frac{1-\sigma}{2} \quad\text{if}\quad 0<\sigma<1. 
			\]
		\end{remark}
		\section{Distribution of the zeros}
		We are now interested in studying the distribution of the zeros of the linear twist. Theorem \ref{zeros_line} and \ref{zeros_rat} below concern the zeros outside the critical strip $0<\sigma<1$, while Corollary \ref{cor_r-v-m} is an analogue of the Riemann-von Mangoldt formula. As a first step, we consider the zeros in the left half-plane $\sigma<0$ coming from the interaction between the two terms on the right-hand side of the functional equation. We refer to these zeros as the \emph{trivial zeros} of $F(s,\alpha)$. Since the $\Gamma$-factor does not vanish, the zeros of the linear twist are the zeros of the function
		\begin{equation}\label{defH}
		H(s):=e^{i\frac{\pi}{2}(1-s-i\theta)}\overline{F}_*(1-s,0,-\alpha q)+(-1)^{\g{a}}e^{-i\frac{\pi}{2}(1-s-i\theta)}\overline{F}_*(1-s,0,\alpha q).
		\end{equation}
		The theorem below shows that, for $\sigma$ sufficiently small, the linear twist has infinitely many zeros located inside circles whose centers lie on certain generalized arithmetic progressions. 		
		\begin{theorem}\label{zeros_line}
			There exist infinitely many circles $C_h$, $h\geq 0$, of center $s_h=\alpha h+\beta$, with $\alpha,\beta\in\C$, $\Re(\alpha),\Re(\beta)<0$, and radius $\eta^{-\Re(s_h)}$ for some $0<\eta<1$, such that $F(s,\alpha)$ has exactly one zero inside each circle. 
		\end{theorem}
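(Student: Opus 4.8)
The plan is to read off the trivial zeros from the functional equation \eqref{f.e.} via \eqref{defH}. For $\sigma<0$ we have $\Re(1-s-i\theta)>1$, so
$\overline{F}_*(1-s,0,-\alpha q)=\sum_{n>\alpha q}\overline{\widetilde a(n)}(n-\alpha q)^{i\theta-1}(n-\alpha q)^{s}$ and
$\overline{F}_*(1-s,0,\alpha q)=\sum_{n>-\alpha q}\overline{\widetilde a(n)}(n+\alpha q)^{i\theta-1}(n+\alpha q)^{s}$ are absolutely convergent generalized Dirichlet series; since the $s$-dependent part of $e^{\pm i\frac\pi2(1-s-i\theta)}$ is $e^{\mp i\frac\pi2 s}$ and $e^{\mp i\frac\pi2 s}(n\mp\alpha q)^{s}=(\mp i(n\mp\alpha q))^{s}$, one may write $H(s)=G_{1}(s)+G_{2}(s)$, where $G_{1}$ has frequencies of argument $-\pi/2$ and moduli $n-\alpha q$ ($n>\alpha q$) and $G_{2}$ has frequencies of argument $+\pi/2$ and moduli $n+\alpha q$ ($n>-\alpha q$). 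Because $\widetilde a$ is $q$-periodic and not identically zero, both index ranges meet every residue class mod $q$, so each of $G_{1},G_{2}$ has a well-defined term of least modulus with nonzero coefficient, say $A_{1}\zeta_{1}^{s}$ and $A_{2}\zeta_{2}^{s}$ with $A_{1}A_{2}\ne0$, $\zeta_{1}=-i\mu_{1}$, $\zeta_{2}=i\mu_{2}$, $\mu_{1},\mu_{2}>0$. One then has the decomposition $H(s)=A_{1}\zeta_{1}^{s}+A_{2}\zeta_{2}^{s}+R(s)$, and, since within each $G_{j}$ the term of least modulus dominates as $\sigma\to-\infty$, a routine estimate gives $|R(s)|\ll(|A_{1}\zeta_{1}^{s}|+|A_{2}\zeta_{2}^{s}|)\rho^{-\sigma}$ as $\sigma\to-\infty$, where $0<\rho<1$ is the largest ratio (least modulus)$/$(second least modulus) occurring in $G_{1}$ or $G_{2}$.

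The next step is to solve $A_{1}\zeta_{1}^{s}+A_{2}\zeta_{2}^{s}=0$, that is, $(\zeta_{2}/\zeta_{1})^{s}=-A_{1}/A_{2}$. The decisive point is that $\zeta_{2}/\zeta_{1}=-\mu_{2}/\mu_{1}$ is a \emph{negative} real number. If $\mu_{1}\ne\mu_{2}$, its principal logarithm is $\log(\mu_{2}/\mu_{1})+i\pi$, so the solution set is an arithmetic progression with common difference $\pm 2\pi i/(\log(\mu_{2}/\mu_{1})+i\pi)$, whose real part equals $\pm 2\pi^{2}/\big((\log(\mu_{2}/\mu_{1}))^{2}+\pi^{2}\big)$; if $\mu_{1}=\mu_{2}$, then $(\zeta_{2}/\zeta_{1})^{s}=(-1)^{s}$ and the common difference is $-2$. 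In each case, orienting the progression so that its common difference has negative real part, and taking $\beta$ to be the term of largest real part that is still $<0$, we obtain $s_{h}=\alpha h+\beta$ with $\Re(\alpha),\Re(\beta)<0$, in the notation of the statement. I would stress that this negativity is precisely the contribution of the rotations $e^{\pm i\frac\pi2 s}$ inherited from the functional equation: without them $\zeta_{2}/\zeta_{1}$ would be positive and the zeros of the leading part would sit on a vertical line.

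Finally, I would localize a genuine zero of $H$ — hence of $F(s,\alpha)$ by \eqref{f.e.} — near each $s_{h}$ by Rouché's theorem applied to $C_{h}=\{\,|s-s_{h}|=\eta^{-\Re(s_{h})}\,\}$, for a fixed $\eta\in(\rho,1)$. Writing the leading part as $A_{1}\zeta_{1}^{s_{h}}\big(\zeta_{1}^{\,s-s_{h}}-\zeta_{2}^{\,s-s_{h}}\big)$ and using that $u\mapsto\zeta_{1}^{u}-\zeta_{2}^{u}$ has a simple zero at $u=0$, on $\partial C_{h}$ the leading part is $\gg|A_{1}\zeta_{1}^{s_{h}}|\,\eta^{-\Re(s_{h})}$ with a constant depending only on $\zeta_{1},\zeta_{2}$, while $|A_{1}\zeta_{1}^{s_{h}}|=|A_{2}\zeta_{2}^{s_{h}}|$ and, by the estimate above, $|R(s)|\ll|A_{1}\zeta_{1}^{s_{h}}|\,\rho^{-\Re(s_{h})}$ there. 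Since $\eta>\rho$ and $-\Re(s_{h})\to+\infty$, the leading part dominates $R$ on $\partial C_{h}$ once $h$ is large, so $H$ has exactly one zero inside $C_{h}$; as the radii tend to $0$ while consecutive centres lie at the fixed distance $|\alpha|$, the $C_{h}$ are eventually pairwise disjoint, and after discarding the finitely many small indices and relabelling we get the asserted family with $h\ge0$. I expect the two delicate points to be the identification of the dominant pair $\zeta_{1},\zeta_{2}$ together with $A_{1},A_{2}\ne0$ — one must take the least term of each of $G_{1},G_{2}$ \emph{separately}, not the two globally smallest, which is where the fractional part $\{\alpha q\}$ and the support of $\widetilde a$ enter — and making $|A_{1}\zeta_{1}^{s}+A_{2}\zeta_{2}^{s}|\gg|R(s)|$ hold uniformly along the progression for precisely the prescribed radius $\eta^{-\Re(s_{h})}$.
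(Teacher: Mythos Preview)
Your proposal is correct and follows essentially the same route as the paper. Both isolate the two leading terms of $H(s)$ (your $A_1\zeta_1^{s}+A_2\zeta_2^{s}$ is the paper's $W(s)$, with $\mu_1=m_1-\alpha q$, $\mu_2=m_2+\alpha q$), locate their zeros as an arithmetic progression drifting into $\sigma<0$, bound the tail by a factor $\rho^{-\sigma}$ with $\rho=\max\big(\frac{m_1-\alpha q}{\widetilde m_1-\alpha q},\frac{m_2+\alpha q}{\widetilde m_2+\alpha q}\big)$ (the paper's $\eta_0$), and conclude by Rouch\'e with any $\eta\in(\rho,1)$. The only difference is packaging: you absorb the rotations $e^{\mp i\pi s/2}$ into complex frequencies $\zeta_j$ and apply Rouch\'e directly on the circles $C_h$, whereas the paper separates modulus and argument to obtain the orthogonal lines $\ell,\ell_k$ and first compares $|W|$ and $|V|$ on the boundary of a strip about $\ell$ and on $\ell_k\pm\delta$ before passing to circles; your version is a bit more direct but proves the same thing.
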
		
		\begin{proof}
			Recalling that the coefficients $\tilde{a}(n)$ are defined in (iii) of Theorem 2, let
			\begin{equation}\label{m1m2}
			m_1=\min\Set{m>\alpha q|\widetilde{a}(m)\ne 0}\quad\text{and}\quad m_2=\min\Set{m>-\alpha q|\widetilde{a}(m)\ne 0},
			\end{equation} then
			\[
			\overline{F}_*(1-s,0,-\alpha q)
			=\frac{\overline{\widetilde{a}(m_1)}}{(m_1-\alpha q)^{1-s-i\theta}}+\sum_{m>m_1}\frac{\overline{\widetilde{a}(m)}}{(m-\alpha q)^{1-s-i\theta}}
			\]
			and similarly
			\[
			\overline{F}_*(1-s,0,\alpha q)
			=\frac{\overline{\widetilde{a}(m_2)}}{(m_2+\alpha q)^{1-s-i\theta}}+\sum_{m>m_2}\frac{\overline{\widetilde{a}(m)}}{(m+\alpha q)^{1-s-i\theta}}.
				\]
				
			Moreover, recalling \eqref{defH}, we write $H(s)=W(s)+V(s)$, where
			\begin{equation}\label{def_W}
			W(s):=e^{i\frac{\pi}{2}(1-s-i\theta)}\frac{\overline{\widetilde{a}(m_1)}}{(m_1-\alpha q)^{1-s-i\theta}}+(-1)^{\mathfrak{a}}e^{-i\frac{\pi}{2}(1-s-i\theta)}\frac{\overline{\widetilde{a}(m_2)}}{(m_2+\alpha q)^{1-s-i\theta}}.
			\end{equation}
			and
			\begin{equation}\label{def_V}
			V(s)=e^{i\frac{\pi}{2}(1-s-i\theta)}\sum_{m>m_1}\frac{\overline{\widetilde{a}(m)}}{(m-\alpha q)^{1-s-i\theta}}+(-1)^{\mathfrak{a}}e^{-i\frac{\pi}{2}(1-s-i\theta)}\sum_{m>m_2}\frac{\overline{\widetilde{a}(m)}}{(m+\alpha q)^{1-s-i\theta}}.
			\end{equation}
			In this scenario, the idea is to study the zeros of $W(s)$ and then to apply Rouché's theorem to localize those of $H(s)$. Let $\overline{\widetilde{a}(m_1)}=\rho_1e^{i\theta_1}$ and $\overline{\widetilde
				{a}(m_2)}=\rho_2e^{i\theta_2}$, with $\rho_1,\rho_2>0$ and $\theta_1,\theta_2\in [0,2\pi)$. Then $W(s)=0$ if and only if
			\[
			\begin{split}
			&e^{i\frac{\pi}{2}(1-\sigma)+\frac{\pi}{2}(t+\theta)}e^{\log\rho_1+i\theta_1}e^{(\sigma-1+i(t+\theta))\log (m_1-\alpha q)}\\&=e^{\pi(\mathfrak{a}+1)}e^{-i\frac{\pi}{2}(1-\sigma)-\frac{\pi}{2}(t+\theta)}e^{\log\rho_2+i\theta_2}e^{(\sigma-1+i(t+\theta))\log (m_2+\alpha q)}.
			\end{split}
			\]
			The equality of the moduli of the two sides gives
			\begin{equation}\label{line}
			\ell=\ell(\alpha): t+\frac{1}{\pi}\sigma\log\bigg(\frac{m_1-\alpha q }{m_2+\alpha q}\bigg)+\frac{1}{\pi}\log\bigg(\frac{\rho_1(m_2+\alpha q)}{\rho_2(m_1-\alpha q)}\bigg)+\theta=0,
			\end{equation}
			while from the arguments we get, for $k\in\Z$,
			\begin{equation}
			\ell_k=\ell_k(\alpha): t\log\bigg(\frac{m_1-\alpha q}{m_2+\alpha q}\bigg)-\pi\sigma+\theta_1-\theta_2+\theta\log\bigg(\frac{m_1-\alpha q}{m_2+\alpha q}\bigg)+(2k+\mathfrak{a})\pi=0.
			\end{equation}
			Observe that the above lines are orthogonal. Then, as $k$ runs over the integers, $W(s)$ has infinitely many zeros in the half-plane $\sigma<0$ lying on the line $\ell$. We denote these zeros as $s_k=\sigma_k+it_k\in\ell$, $k\in\Z$, observing that they form a generalized arithmetic progression.\\
			Let now $s=\sigma+it\in \ell$ and $\delta>0$. Define $t^*=t+\delta$ and $s^*=\sigma+it^*$. For $\delta$ sufficiently small,
			\begin{equation}\label{w}
			\begin{split}
			W(s^*)&\gg
			e^{\frac{\pi}{2}\delta}e^{\frac{\pi}{2}(t+\theta)}\frac{\rho_1}{(m_1-\alpha q)^{1-\sigma}}-e^{-\frac{\pi}{2}\delta}e^{-\frac{\pi}{2}(t+\theta)}\frac{\rho_2}{(m_2+\alpha q)^{1-\sigma}}\\&=((m_1-\alpha q)(m_2+\alpha q))^\frac{\sigma-1}{2}(\rho_1\rho_2)^{\frac{1}{2}}(e^{\frac{\pi}{2}\delta}-e^{-\frac{\pi}{2}\delta})\\&\gg ((m_1-\alpha q)(m_2+\alpha q))^\frac{\sigma-1}{2}(\rho_1\rho_2)^{\frac{1}{2}}\delta.
			\end{split}
			\end{equation}
			To derive an upper bound for $|V(s^*)|$, we denote
			\[\widetilde{m}_1=\min\Set{m>m_1|\widetilde{a}(m)\ne 0}\quad\text{and}\quad\widetilde{m}_2=\min\Set{m>m_2|\widetilde{a}(m)\ne 0}.\]
			Using the integral criterion and observing that the sums over $\chi$ and $n$ are finite and the set $\Set{\frac{\mid\overline{c}(n)\mid}{n}|\chi\in\g{X}(q,\xi),n|\frac{q}{f_{\chi}}}$ is bounded, we get respectively
			\[
			\sum_{m\geq \widetilde{m}_1}\frac{\overline{|\widetilde{a}(m)}|}{(m-\alpha q)^{1-\sigma}}
			\ll\frac{1}{(\widetilde{m}_1-\alpha q)^{-\sigma}}\quad\text{and}\quad
			\sum_{m\geq \widetilde{m}_2}\frac{|\overline{\widetilde{a}(m)}|}{(m+\alpha q)^{1-\sigma}}\ll  \frac{1}{(\widetilde{m}_2+\alpha q)^{-\sigma}}.
			\]
			It follows that 
			\begin{equation}\label{v}
			\begin{split}
			V(s^*)&\ll e^{\frac{\pi}{2}(t^*+\theta)}\frac{1}{(\widetilde{m}_1-\alpha q)^{-\sigma}}+e^{-\frac{\pi}{2}(t^*+\theta)}\frac{1}{(\widetilde{m}_2+\alpha q)^{-\sigma}}\\&
			\ll 
			((m_1-\alpha q)(m_2+\alpha q))^{\frac{\sigma-1}{2}}\bigg(A\bigg(\frac{m_1-\alpha q}{\widetilde{m}_1-\alpha q}\bigg)^{1-\sigma}+B\bigg(\frac{m_2+\alpha q}{\widetilde{m}_2+\alpha q}\bigg)^{1-\sigma}\bigg),
			\end{split}
			\end{equation}
			where $A=\big(\frac{\rho_2}{\rho_1}\big)^{\frac{1}{2}}(\widetilde{m}_1-\alpha q)$ and $B=\big(\frac{\rho_1}{\rho_2}\big)^{\frac{1}{2}}(\widetilde{m}_2+\alpha q)$.\\
			We now observe that $\frac{m_1-\alpha q}{\widetilde{m}_1-\alpha q}<1$, so $\big(\frac{m_1-\alpha q}{\widetilde{m}_1-\alpha q}\big)^{1-\sigma}\to 0$ as $\sigma\to -\infty$ and similarly the other term. Then, if $\delta=\eta_0^{-\sigma}$, with 
			\begin{equation}\label{eta0}
			\max\bigg(\frac{m_1-\alpha q}{\widetilde{m}_1-\alpha q},\frac{m_2+\alpha q}{\widetilde{m}_2+\alpha q}\bigg)<\eta_0<1,
			\end{equation}
			combining equations \eqref{w} and \eqref{v}, for $\sigma$ sufficiently small we have $|W(s^*)|-|V(s^*)|>0$. The same result follows by the same argument with $s^*=\sigma+i(t-\delta)$, $\delta>0$. \\ Since $s=\sigma+it$ varies on the line $\ell$, we have proved that for $\sigma>-\sigma'$, with a suitable $\sigma'>0$, $|W(s)|-|V(s)|>0$
			on the boundary of the region
			\[
			\g{L}_{\delta}=\g{L}_{\delta}(\alpha)=\Set{s\in \C \text{ with distance}<\delta \text{ from the line }\ell }.
			\] 
			Let now $k\in\Z$ and consider the line $l_k$.
			Given $\delta>0$ sufficiently small, assume that $s\in\ell_k\pm\delta$. Then, we can prove
			\begin{equation}\label{W}
			\begin{split}
			W(s)&
			\gg\delta \max\big(e^{\frac{\pi}{2}(t+\theta)}\rho_1(m_1-\alpha q)^{\sigma-1},e^{-\frac{\pi}{2}(t+\theta)}\rho_2(m_2+\alpha q)^{\sigma-1}\big).
			\end{split}\end{equation}
			Moreover, we already know that 
			\[
			V(s)\ll \max\bigg(e^{\frac{\pi}{2}(t+\theta)}(m_1-\alpha q)^{\sigma-1}\bigg(\frac{m_1-\alpha q}{\widetilde{m}_1-\alpha q}\bigg)^{1-\sigma},e^{-\frac{\pi}{2}(t+\theta)}(m_2+\alpha q)^{\sigma-1}\bigg(\frac{m_2+\alpha q}{\widetilde{m}_2+\alpha q}\bigg)^{1-\sigma} \bigg).
			\]
			\\
			So gathering equation \eqref{W} and the above upper bound, we can state that there exist $\sigma''\geq 0$ such that for $\sigma<-\sigma''$, $|W(s)|-|V(s)|>0$
			when $s\in \ell_k\pm \delta$ for any sufficiently small $\delta>0$.\\
			Let now \begin{equation}\label{sigmabar}
			\overline{\sigma}=\max(\sigma',\sigma'').
			\end{equation} Applying Rouché's theorem, we can state that there exists $\eta\in(\eta_0,1)$ with $\eta_0$ as in \eqref{eta0} such that for each zero $s_k=\sigma_k+it_k$ of $W(s)$ with $\sigma_k<-\overline{\sigma}$, $F(s,\alpha)$ has exactly one zero in a circle with center in $s_k$ and radius $\eta^{-\sigma_k}$. Re-parameterizing the zeros the statement follows.
		\end{proof}
		\begin{remark}
			The result above corresponds to Corollary 2 in \cite{grado2}, but actually our theorem is slightly more precise. Indeed, Kaczorowski and Perelli showed that the zeros are located around a certain line, and observed that the problem of the finer definition of the trivial zeros is open. On the other hand, we prove that the zeros in our case are located inside circles
			with centers lying in a generalized arithmetic progression on $\ell$ and radius tending to zero as $\sigma\to-\infty$.
		\end{remark}
		We now present another theorem on the distribution of the zeros. In this case, the proof is complete only if $\alpha$ is rational, while for irrational values of $\alpha$ only partial results are known. 
		\begin{theorem}\label{zeros_rat}
			Let $0<\alpha\leq 1$ be rational. If $F(s,\alpha)$ and $\overline{F}_*(s,0,\pm\alpha q)$ are not of the form $P(s)L(s,\chi)$, where $P(s)$ is a Dirichlet polynomial and $L(s,\chi)$ is a Dirichlet $L$-function, then
			\begin{itemize}
				\item[(i)] there exist $\sigma_1,\sigma'_1>0$ such that the set 
				\[
				\{\sigma\in(1,1+\sigma_1]\mid F(\sigma+it,\alpha)=0,\text{ for some }t\in\R\}
				\]
				is dense in $(1,1+\sigma_1]$ and $F(\sigma+it,\alpha)\ne 0$ if $\sigma>1+\sigma'_1$. 
				\item[(ii)] there exist $\sigma_2>0$ such that the set 
				\[
				\{\sigma\in[-\sigma_2,0)\mid F(\sigma+it,\alpha)=0,\text{ for some }t\in\R\}
				\]		
				is dense in the interval $[-\sigma_2,0)$. 
			\end{itemize}
		\end{theorem}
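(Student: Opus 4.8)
The plan is to exploit the Dirichlet-series structure of the three functions involved. When $\alpha=a/r$ is rational, the coefficients $\widetilde a(n)$ are periodic, so both $F(s,\alpha)$ (after factoring out the trivial leading term $\widetilde a(n_0)n_0^{-s-i\theta}$ for the smallest $n_0$ with $\widetilde a(n_0)\neq 0$) and each $\overline F_*(s,0,\pm\alpha q)$ can be written, in the half-plane of absolute convergence, as $c\cdot n_0^{-s}(1+g(s))$ where $g(s)=\sum_{n\geq 2}b(n)n^{-s}$ with $b(n)$ supported on a fixed arithmetic progression and $|g(s)|\to 0$ uniformly as $\sigma\to+\infty$. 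The standard device (going back to work on zeros of Dirichlet polynomials and general Dirichlet series, e.g. Bohr's theory and its refinements used by Kaczorowski–Perelli in \cite{grado2}) is: first I would show that the closure of the set of values $\{\log F(\sigma+it,\alpha)+ (s+i\theta)\log n_0 : t\in\R\}$, for $\sigma$ fixed and large, is a disc (or at least an annulus) in $\C$ determined by the multiplicative/additive independence of the frequencies $\log(n/n_0)$; the non-degeneracy hypothesis — that $F(s,\alpha)$ is \emph{not} of the form $P(s)L(s,\chi)$ — is exactly what guarantees there are enough independent frequencies so that this value set is genuinely two-dimensional rather than collapsing onto a curve. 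Granting that, for each $\sigma$ in a right-neighbourhood of where the value set first contains (or surrounds) $0$, there is a $t$ with $F(\sigma+it,\alpha)=0$, giving density; and for $\sigma$ large enough $1+g(s)$ is bounded away from $0$ by the triangle inequality, giving the zero-free region. That handles (i).

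For part (ii) I would feed this information into the functional equation \eqref{f.e.}: for $\sigma<0$ the zeros of $F(s,\alpha)$ are (since the $\Gamma$-factor and the exponential prefactor never vanish) exactly the zeros of
\[
e^{i\frac{\pi}{2}(s-i\theta)}\overline F_*(s,0,-\alpha q)+(-1)^{\g a}e^{-i\frac{\pi}{2}(s-i\theta)}\overline F_*(s,0,\alpha q)
\]
with $1-s$ in place of $s$, i.e. the zeros of $H(s)$ from \eqref{defH} for $\sigma$ in a left-neighbourhood of $0$, equivalently $1-\sigma$ just above $1$. One of the two terms dominates as one moves into $\sigma\ll 0$ — that is the mechanism behind Theorem \ref{zeros_line} — but near $\sigma=0^-$ the two terms are comparable, and the dominant term, say $\overline F_*(1-s,0,-\alpha q)$, is again a general Dirichlet series of the shape $c\,m_1^{-(1-s-i\theta)}(1+\text{small})$ to which the same Bohr-type value-distribution argument applies; the second term acts as a bounded perturbation that merely translates and slightly distorts the value disc. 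Writing $H(s)=c\,m_1^{s-1+i\theta}\bigl(e^{i\pi(1-s-i\theta)/2}(1+g_1(1-s))+(-1)^{\g a}(\overline{\widetilde a(m_2)}/\overline{\widetilde a(m_1)})(m_2+\alpha q)^{s-1-i\theta}(m_1-\alpha q)^{1-s+i\theta}e^{-i\pi(1-s-i\theta)/2}(1+g_2(1-s))\bigr)$ and applying the density-of-values statement for the bracket, I get a $\sigma_2>0$ such that $H$ vanishes for a dense set of $\Re(s)\in(1,1+\text{something}]$, i.e. $F(\sigma+it,\alpha)=0$ for a dense set of $\sigma\in[-\sigma_2,0)$. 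Again the hypothesis on $\overline F_*(s,0,\pm\alpha q)$ not being $P(s)L(s,\chi)$ is what makes the relevant value set two-dimensional.

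The main obstacle — and the reason the theorem is stated only for rational $\alpha$ — is the value-distribution input: for rational $\alpha$ the Dirichlet series have coefficients supported on finitely many residue classes with frequencies $\{\log n\}$ that, after removing the common factor, contain a $\Q$-linearly independent subset unless the series factors through a single $L$-function, and Kronecker's theorem (or the Bohr–Jessen/Besicovitch almost-periodicity machinery) then delivers the full disc of values; for irrational $\alpha$ the frequencies $\log(n+\beta)$ are no longer commensurable in the needed way and one only controls the values along special sequences of $t$, whence only the partial results alluded to in the statement. Concretely, the step I expect to require the most care is verifying that the non-factorization hypothesis really does force the frequency set to be rich enough — this is where one must rule out the degenerate case in which all but one coefficient conspire to make the value set lie on a lower-dimensional curve — and then packaging Rouché's theorem (as in the proof of Theorem \ref{zeros_line}) to pass from zeros of the "dominant part" to genuine zeros of $F(s,\alpha)$, uniformly over the relevant range of $\sigma$.
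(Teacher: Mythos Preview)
Your plan for part~(i) is essentially a sketch of the ideas behind the results of Saias--Weingartner \cite{s-w} and Righetti \cite{r2}, which the paper simply cites: since $\alpha$ is rational, $F(s,\alpha)$ has periodic coefficients and is therefore a finite combination $\sum_j P_j(s)L(s,\chi_j)$; if this does not collapse to a single term (the non-factorization hypothesis), \cite{s-w} gives infinitely many zeros for $\sigma>1$, and \cite[Theorem~1.1]{r2} gives the density of their real parts together with the zero-free half-plane. Your Bohr-theoretic outline would, if carried out, re-prove those theorems, so this part is fine in spirit, though you should be aware that the step ``non-factorization forces the frequency set to be rich enough'' is exactly the nontrivial content of those papers and is not a routine Kronecker argument.

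Part~(ii), however, has a genuine gap. Your attempt to apply a Bohr/Kronecker value-distribution argument directly to the bracketed expression for $H(s)$ cannot work as stated: the factors $e^{\pm i\frac{\pi}{2}(1-s-i\theta)}$ have modulus $e^{\pm\frac{\pi}{2}(t+\theta)}$, so $H(s)$ is \emph{not} almost periodic in $t$ on any vertical line, and your claim that the second term is a ``bounded perturbation that merely translates and slightly distorts the value disc'' is false for bounded $t$ (you yourself note the two terms are comparable near $\sigma=0^-$). The paper's mechanism is different. Write $H(s)=e^{i\frac{\pi}{2}(1-s-i\theta)}F_1(s)+(-1)^{\mathfrak a}e^{-i\frac{\pi}{2}(1-s-i\theta)}F_2(s)$ with $F_j(s)=\overline F_*(1-s,0,\mp\alpha q)$. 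Each $F_j$ \emph{is} an ordinary Dirichlet series in $1-s$ with periodic coefficients, so \cite{s-w} and \cite{r2} apply to $F_j$ directly and produce zeros $\rho$ with $\Re\rho<0$ whose real parts are dense in some $[-\sigma_2,0)$. Now use the almost periodicity of $F_1$ alone: for any $\varepsilon>0$ the set of $\tau\in\R$ with $\max_{|s|=\delta}|F_1(s+\rho+i\tau)-F_1(s+\rho)|<\varepsilon$ is relatively dense. Choosing such a $\tau$ large and positive, one has $|e^{-i\frac{\pi}{2}(1-s-i\theta)}H(s)-F_1(s)|=e^{-\pi(t+\theta)}|F_2(s)|\ll e^{-\pi\tau}\tau^A$ by the polynomial-growth bound (Corollary~\ref{th_poly_growth}), and Rouch\'e on the disc $|s-\rho-i\tau|\le\delta$ then yields a zero of $H$ with real part within $\delta$ of $\Re\rho$. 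The key point you are missing is that the almost-periodicity input lives in $F_1$ (or $F_2$ for $t\to-\infty$), not in $H$; the exponential weights are used not as a static perturbation at bounded height but to annihilate the other term \emph{after} one has translated far up or down the vertical line.
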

		\begin{proof}
			If $\alpha$ is rational, $F(s,\alpha)$, and $\overline{F}(s,0,\pm \alpha q)$  can be written as linear combinations of Dirichlet $L$-functions with Dirichlet polynomials as coefficients, since they have periodic coefficients (cf. \cite[Theorem PDCB]{s-w}). Then, by the result of Saias and Weingartner \cite[Theorem]{s-w}, if these linear combinations do not reduce to a single term, they have infinitely many zeros in the half-plane $\sigma>1$. 
			The density of the real parts and the possible existence of gaps in the region where the zeros exist are a consequence of \cite[Theorem 1.1]{r2}. 
			Therefore, part $(i)$ is proved. 
			
			Let now $\sigma<0$. The main tool in the proof of assertion $(ii)$ is the functional equation. Consider again the function $H(s)$ in \eqref{defH}. We write
			\begin{equation}\label{H}
			\begin{split}
			H(s)&=e^{i\frac{\pi}{2}(1-s-i\theta)}\overline{F}_*(1-s,0,-\alpha q)+(-1)^{\mathfrak{a}}e^{-i\frac{\pi}{2}(1-s-i\theta)}\overline{F}_*(1-s,0,\alpha q)\\&=e^{i\frac{\pi}{2}(1-s-i\theta)}F_1(s)+(-1)^{\mathfrak{a}}e^{-i\frac{\pi}{2}(1-s-i\theta)}F_2(s).
			\end{split}    
			\end{equation}
			Observe that, if they are not of the form $P(s)L(s,\chi)$,  $F_1(s)$ and $F_2(s)$ have infinitely many zeros, since $1-\sigma>1$. Moreover, the exponential factors imply that if one of the two terms of $H(s)$ tends to infinity, the other tends to zero.
			
			Assume $t=\Im(s)>0$. Let $\rho$ be a zero of $F_1(s)$ and consider $\delta>0$ sufficiently small such that $F_1$ does not vanish on a circle of center $\rho$ and radius $\delta$. Define 
			\begin{equation}\label{gamma}
			\gamma=\underset{|s|=\delta}{\min}|F_1(s+\rho)|>0.
			\end{equation}
			Since we are working with generalized Dirichlet series, by almost periodicity we can assert that, for any $\varepsilon>0$, the set of $\tau\in\R$ such that 
			\begin{equation}\label{almost_periodic}
			\underset{|s|=\delta}{\max}|F_1(s+\rho+i\tau)-F_1(s+\rho)|<\varepsilon
			\end{equation}
			is relatively dense (i.e. there exists $\ell>0$ such that any interval of length $\ell$ contains at least a $\tau$ as above). Moreover, we have 
			\[
			|e^{-\frac{\pi}{2}(1-s-i\theta)}H(s)-F_1(s)|=|e^{-\pi(1-s-i\theta)}F_2(s)|=e^{-\pi(t+\theta)}|F_2(s)|.
			\]
			Then, the polynomial growth on vertical strips (Corollary \ref{th_poly_growth}) implies, for some positive $A$,
			\begin{equation}\label{poly_gr}
			\underset{|s|=\delta}{\max}|H(s+\rho+i\tau)e^{-i\frac{\pi}{2}(1-s-\rho-i\tau)}-F_1(s+\rho+i\tau)|\ll e^{-\pi\tau}\tau^A.
			\end{equation}
			We gather equations \eqref{gamma}, \eqref{almost_periodic} and \eqref{poly_gr}, choosing $\varepsilon$ and $\tau$ such that $\varepsilon+e^{-\pi\tau}\tau^A<\gamma$. Then, by triangular inequality we get
			\[\underset{|s|=\delta}{\max}|H(s+\rho+i\tau)e^{-i\frac{\pi}{2}(1-s-\rho-i\tau)}-F_1(s+\rho)|<	\gamma=\underset{|s|=\delta}{\min}|F_1(s+\rho)|.\]
			Applying Rouché's theorem, we deduce that $F_1(s)$ and $H(s)$ have the same number of zeros inside the circle of center $\rho$ and radius $\delta$. Since $F_1(\rho)=0$, we conclude that $H(s)$, then $F(s,\alpha)$, has a zero inside the considered circle.\\	
			The same argument applies for $t<0$, replacing $F_1(s)$ with $F_2(s)$, since in this case the second term is dominating in $H(s)$. This concludes the proof of part $(ii)$. 
		\end{proof}
		If $\alpha$ is irrational, $\overline{F}_*(s,0,\pm\alpha q)$ can be seen as generalized Hurwitz zeta functions with periodic coefficients. Thus, by our result on the zeros of generalized Hurwitz zeta functions
		\cite{zag2},
		we deduce that they have infinitely many zeros for $\sigma>1$.
		Therefore, if $\sigma<0$, $\overline{F}_*(1-s,0,\pm\alpha q)$ have infinitely many zeros (cf. \eqref{H}) and the argument used in the proof of $(ii)$ applies. Thus, part $(ii)$ of Theorem \ref{zeros_rat} even holds for $\alpha$ irrational. On the other hand, the existence of infinitely many zeros $F(s,\alpha)$ in the right half-plane, is still an open problem if $\alpha$ is not rational, since the analogue for the classical Hurwitz-Lerch zeta function is still not known.\\   	
		
		We now want to derive an analogue of the Riemann-von Mangoldt formula for the linear twist. Let $\overline{\sigma}$ be as in \eqref{sigmabar} and let $\sigma'_1$ be as in Theorem \ref{zeros_rat} (then $1+\sigma'_1$ is an upper bound of the real parts of the zeros). We define as \emph{non-trivial zeros} the zeros in the strip $-\overline{\sigma}\leq \sigma\leq 1+\sigma'_1$. Let
		\[
		N_F(T,\alpha)=\sharp\Set{\rho=\beta+i\gamma| F(\rho,\alpha)=0, -\overline{\sigma}\leq\beta\leq 1+\sigma '_1, |\gamma|\leq T}
		\]
		be the counting function of the non-trivial zeros and let $\overline{n}$ be the smallest integer $n$ such that $a(n)\ne 0$. Then, recalling \eqref{m1m2}, the following result holds.  
		\begin{corollary}\label{cor_r-v-m}
			Let $\alpha\in(0,1]$ and $F\in \Sel^{\sharp}_1$. Then, as $T\to \infty$, 
			\begin{equation}\label{r-v-m}
			N_F(T,\alpha)=\frac{T}{\pi}\log T+\frac{T}{\pi}\log\bigg(\frac{q}{2\pi e \overline{n}\sqrt{(m_1-\alpha q)(m_2+\alpha q)}}\bigg)+ O(\log T).
			\end{equation}
		\end{corollary}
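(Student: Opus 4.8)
The plan is to derive the Riemann--von Mangoldt formula by the classical argument of the argument principle, applied not to $F(s,\alpha)$ directly but to a suitably normalised version of the right-hand side of the functional equation \eqref{f.e.}, since the non-trivial zeros of $F(s,\alpha)$ coincide with the zeros of the function $H(s)$ defined in \eqref{defH}. First I would fix a large $T$ and count zeros in the rectangle $\mathcal R_T$ with vertices $-\overline\sigma\pm iT$ and $1+\sigma_1'\pm iT$ via $N_F(T,\alpha)=\frac{1}{2\pi}\Delta_{\partial\mathcal R_T}\arg H(s)$ (being careful with the horizontal segments and with the $\pm iT$ levels chosen to avoid zeros, in the standard way). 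Because $H$ is, up to the nonvanishing $\Gamma$-factor and power of $q/(2\pi)$, equal to $F(1-s,\alpha)$, which for $\Re(1-s)>1$, i.e.\ $\sigma<0$, is given by an absolutely convergent generalized Dirichlet series with leading term $\overline{a}(\overline n)\,\overline n^{\,-(1-s-i\theta)}$ (here $\overline n$ is the least $n$ with $a(n)\ne0$), the variation of the argument along the left vertical side $\sigma=-\overline\sigma$ is $O(1)$. On the right side $\sigma=1+\sigma_1'$ the linear twist $F(s,\alpha)$ itself is not yet in its region of absolute convergence, but by Theorem \ref{zeros_rat}(i) (extended to irrational $\alpha$ as noted after its proof) it has no zeros there, and more importantly its own Dirichlet series, with leading term $\widetilde a(\overline n)\,\overline n^{-(s+i\theta)}$, controls $\Delta\arg$ on that side up to $O(1)$ as well; in fact one should transport everything to $\sigma<0$ using \eqref{f.e.} so that only one genuinely analytic expression, $H$, needs to be tracked.

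The main term comes, as usual, entirely from the horizontal segments, i.e.\ from the change in argument of the "gamma and exponential" prefactor. Concretely, writing \eqref{f.e.} as
\[
F(1-s,\alpha)=\frac{\omega^*\Gamma(s-i\theta)q^{s-i\theta-\frac12}}{i^{\g a}(2\pi)^{s-i\theta}}\,H_0(s),\qquad H_0(s):=e^{i\frac\pi2(s-i\theta)}\overline F_*(s,0,-\alpha q)+(-1)^{\g a}e^{-i\frac\pi2(s-i\theta)}\overline F_*(s,0,\alpha q),
\]
I would analyse the modulus/argument of the prefactor together with the two exponentials $e^{\pm i\frac\pi2(s-i\theta)}$ hidden in $H_0$: on the upper segment the first term of $H_0$ dominates (its exponential grows like $e^{\pi t/2}$), on the lower segment the second term dominates, and in each case $\overline F_*(s,0,\mp\alpha q)$ is an absolutely convergent series with leading term $\overline{\widetilde a(m_1)}\,(m_1-\alpha q)^{-(s-i\theta)}$ resp.\ $\overline{\widetilde a(m_2)}\,(m_2+\alpha q)^{-(s-i\theta)}$. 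Collecting the linear-in-$t$ contributions to the argument from $\Gamma(s-i\theta)$ (Stirling: $\arg\Gamma(\sigma+it)\sim t\log t - t$), from $q^{s}$ (giving $t\log q$), from $(2\pi)^{-s}$ (giving $-t\log 2\pi$), from $e^{\pm i\frac\pi2 s}$ on the dominant term (giving $\mp\frac\pi2 t$, which cancels against the $\mp\frac\pi2 t$ coming from the subdominant-vs-dominant bookkeeping — this is exactly the mechanism that converts the bound $\ll |t|^\xi$ into a clean count), from $\overline n^{-s}$ in the right-side transport (giving $-t\log\overline n$), and from $(m_1-\alpha q)^{-s}$ resp.\ $(m_2+\alpha q)^{-s}$ (giving $-\frac12 t\log((m_1-\alpha q)(m_2+\alpha q))$ after symmetrising over the two segments), and finally multiplying by $2$ for the two horizontal sides and dividing by $2\pi$, yields precisely the stated main term
\[
\frac{T}{\pi}\log T+\frac{T}{\pi}\log\!\Big(\frac{q}{2\pi e\,\overline n\,\sqrt{(m_1-\alpha q)(m_2+\alpha q)}}\Big).
\]

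For the error term I would, as in the classical Riemann--von Mangoldt argument, bound the variation of $\arg H$ along each horizontal segment by $O(\log T)$: this follows from Jensen's inequality / a standard lemma counting the number of sign changes of $\Re H$ on the segment, using the polynomial growth of $F(s,\alpha)$ on vertical strips from Corollary \ref{th_poly_growth} (which gives $H(s)\ll |t|^{O(1)}$ uniformly on $-\overline\sigma\le\sigma\le 1+\sigma_1'$) together with a lower bound $H(\sigma+iT)\gg T^{-C}$ secured by choosing the heights $T$ in a set of positive density where no zero has ordinate within, say, $1/\log T$ of $T$ — again the usual manoeuvre. The one point requiring a little care, and the step I expect to be the main obstacle, is the treatment of the two endpoints of each horizontal segment where the dominant term of $H_0$ switches (near $t=0$), and the verification that the subdominant term of $H_0$ really is negligible uniformly up to the edges $\sigma=-\overline\sigma$ and $\sigma=1+\sigma_1'$ of the strip — i.e.\ that the strip is wide enough that the ratios $(m_1-\alpha q)/(\widetilde m_1-\alpha q)$ etc.\ from the proof of Theorem \ref{zeros_line} are already in play; this is where one must quote the bounds \eqref{w}--\eqref{v} and the continuity/convexity of the Lindelöf function to glue the $\sigma<0$ regime (controlled by $H$'s series) to the $\sigma>1$ regime (controlled by $F(s,\alpha)$'s series) across the critical strip without losing more than $O(\log T)$.
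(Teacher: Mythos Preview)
Your approach is essentially the same as the paper's: argument principle on a rectangle, functional equation to control the left side, Stirling for the $\Gamma$-factor, leading terms of the relevant Dirichlet series for the vertical edges, and the standard Jensen-type bound $O(\log T)$ for the horizontal edges. The paper's own proof is in fact a two-line sketch that refers to \cite[Corollary 3]{grado2} for all details, so your write-up is already more explicit than what the paper provides.

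Two points of comparison are worth noting. First, the paper does \emph{not} work with a single rectangle $-\overline\sigma\le\sigma\le 1+\sigma_1'$, $|t|\le T$; it splits the count as $N_F(T,\alpha)=N^+(T)+N^-(T)+O(1)$ and applies the argument principle separately to the rectangles $[-a,b]\times[T_0,T]$ and $[-a,b]\times[-T,-T_0]$ with $a>\overline\sigma$, $b>1+\sigma_1'$ and a fixed $T_0>0$. The reason is exactly the crossover you flag at the end: on the left vertical edge the dominant term of $H$ switches near $t=0$, and tracking $\arg H$ continuously through that region is awkward. By cutting out a fixed horizontal band $|t|\le T_0$ (which contributes $O(1)$ zeros) one has a single dominant term on each of the two left vertical edges, and the analysis is clean. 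Your single-rectangle setup would work, but only after essentially reproducing this split.

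Second, a few of your intermediate statements are garbled. The linear twist $F(s,\alpha)$ \emph{is} absolutely convergent at $\sigma=1+\sigma_1'>1$; the reason the right edge must be pushed that far is not convergence but the possible zeros of $F(s,\alpha)$ in $1<\sigma\le 1+\sigma_1'$ (Theorem~\ref{zeros_rat}(i)). More importantly, the sentence ``the main term comes, as usual, entirely from the horizontal segments'' is not right here: on a horizontal segment $t=T$ the argument of $\Gamma(\sigma+iT)$, of $q^{\sigma+iT}$, etc.\ varies by $O(1)$ as $\sigma$ runs over $[-a,b]$. The $T\log T$ and the linear-in-$T$ contributions you correctly list all come from the \emph{vertical} edges --- the left one via $\Gamma$, $q^s$, $(2\pi)^{-s}$ and the leading term of $\overline F_*$, and the right one via the leading term $\overline n^{-s}$ of $F(s,\alpha)$. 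Your bookkeeping of the individual contributions is correct; only the attribution to ``horizontal segments'' should be amended.
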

		\begin{proof}
			Given $T_0>0$, $a>\overline{\sigma}$, $b>1+\sigma'_1$ sufficiently large, define $N^{\pm}(T)$ respectively as
			\[
			N^+(T)=\sharp\Set{\rho=\beta+i\gamma| F(\rho,\alpha)=0, -a\leq\beta\leq b, T_0<\gamma\leq T},
			\]
			and
			\[
			N^-(T)=\sharp\Set{\rho=\beta+i\gamma| F(\rho,\alpha)=0, -a\leq\beta\leq b, -T\leq\gamma< -T_0}.
			\]
			Then, \[
			N_F(T,\alpha)=N^+(T)+N^-(T)+O(1).
			\]
			The result follows by a suitable application of the argument principle to $N^{\pm}(T)$. We consider the rectangle joining the points $-a+iT_0$, $b+iT_0$, $b+iT$ and $-a+iT$ to calculate $N^+(T)$. Similarly, for $N^-(T)$ we apply the same argument to the rectangle in the lower half-plane joining the points $-a-iT$, $b-iT$, $b-iT_0$ and $-a-iT_0$. The proof proceeds exactly as in \cite[Corollary 3]{grado2}, the differences being in the coefficient of $T \log T$ in \eqref{r-v-m}, halved since here we are in degree 1, as well as in the coefficient of $T$ in \eqref{r-v-m}, where again the change of degree is visible.
		\end{proof}
		\begin{remark}
			We conclude observing that, if $\alpha$ is rational and $\alpha\notin\{\frac{1}{2},1\}$, the linear twist $F(s,\alpha)$ has infinitely many zeros in the strip $\frac{1}{2}<\sigma<1$ and the real parts of these zeros are dense in the interval $(\frac{1}{2},1)$. This results is an immediate consequence of Theorem 2 of \cite{k-k}, since in this case \[
			F(s,\alpha)=\sum_{j=1}^{N}P_j(s)L(s,\chi_j)\quad\text{with}\quad N\geq 2.
			\]
		
		\end{remark}
		
	\end{document}